\newcommand{\R}		{\mathbb{R}}
\newcommand{\C}		{\mathbb{C}}
\newcommand{\N}		{\mathbb{N}}
\newcommand{\Z}		{\mathbb{Z}}
\newcommand{\cp}{\mathsf{cp}}
\renewcommand{\deg}{\mathsf{deg}}
\newcommand{\Alg}{\mathcal{S}}
\newcommand{\RS}{\mathfrak{R}}
\newcommand{\bd}{\boldsymbol{\Delta}}
\newcommand{\map}{\Phi}
\newcommand{\mdp}{\mathrm{mod~periods~}}
\newcommand{\z}	{{\boldsymbol z}}
\newcommand{\w}	{{\boldsymbol w}}
\newcommand{\tr}	{{\boldsymbol t}}
\newcommand{\e}	{{\boldsymbol e}}
\newcommand{\ualpha}{\boldsymbol\upalpha}
\newcommand{\ubeta}{\boldsymbol\upbeta}
\newcommand{\ugamma}{\boldsymbol\upgamma}
\newcommand{\rhy}   {\textnormal{RHP}-${\boldsymbol Y}$}
\newcommand{\rhs}   {\textnormal{RHP}-${\boldsymbol X}$}
\newcommand{\rhn}   {\textnormal{RHP}-${\boldsymbol N}$}
\newcommand{\rhr}   {\textnormal{RHP}-${\boldsymbol Z}$}
\newtheorem{theorem}{Theorem}
\newtheorem{lemma}[theorem]{Lemma}
\newtheorem{definition}[theorem]{Definition}
\newtheorem*{st}{Theorem (Stahl)}
\newtheorem*{prt}{Theorem (Perevoznikova-Rakhmanov) \cite{uPerevRakh}}
\begin{document}

\title[Nuttall's theorem with analytic weights on algebraic S-contours]{Nuttall's theorem with analytic weights on algebraic S-contours}

\author[M. Yattselev]{Maxim L. Yattselev}

\address{Department of Mathematical Sciences, Indiana University-Purdue University Indianapolis, 402~North Blackford Street, Indianapolis, IN 46202}

\email{maxyatts@math.iupui.edu}

\subjclass[2000]{42C05, 41A20, 41A21}

\keywords{Pad\'e approximation, orthogonal polynomials, non-Hermitian orthogonality, strong asymptotics, S-contours, matrix Riemann-Hilbert approach}

\dedicatory{Dedicated to the memories of Herbert Stahl, brilliant mathematician and a kind friend, and Andrei~Alexandrovich~Gonchar, great visionary and a wonderful teacher.}

\begin{abstract}

Given a function $f$ holomorphic at infinity, the $n$-th diagonal Pad\'e approximant to $f$, denoted by $[n/n]_f$, is a rational function of type $(n,n)$ that has the highest order of contact with $f$ at infinity. Nuttall's theorem provides an asymptotic formula for the error of approximation $f-[n/n]_f$ in the case where $f$ is the Cauchy integral of a smooth density with respect to the arcsine distribution on $[-1,1]$. In this note, Nuttall's theorem is extended to Cauchy integrals of analytic densities on the so-called algebraic S-contours (in the sense of Nuttall and Stahl).

\end{abstract}

\maketitle

\section{Introduction}
\label{sec:intro}

Let 
\begin{equation}
\label{f}
f(z) = \sum_{k\geq0}f_kz^{-k}
\end{equation}
be a convergent power series. A diagonal Pad\'e approximant to $f$ at infinity is a rational function that has the highest order of contact with $f$ at infinity \cite{Pade92,BakerGravesMorris}. More precisely, let $(P_n,Q_n)$ be a pair of polynomials each of degree at most $n$ satisfying
\begin{equation}
\label{linsys}
R_n(z):= \big(Q_nf-P_n\big)(z) = O\left(1/z^{n+1}\right) \quad \mbox{as} \quad z\to\infty.
\end{equation}
It is not hard to verify that the above relation can be equivalently written as a linear system in terms of the Fourier coefficients of $f$, $P_n$, and $Q_n$ with one more unknown than equations. Therefore the system is always solvable and no solution of it can be such that $Q_n\equiv0$ (we may thus assume that $Q_n$ is monic). In general, a solution of \eqref{linsys} is not unique. However, if $(P_n,Q_n)$ and $(\tilde P_n,\tilde Q_n)$ are two distinct solutions, then $P_n\tilde Q_n-\tilde P_nQ_n\equiv0$ since this difference must behave like $O(1/z)$ near the point at infinity as easily follows from \eqref{linsys}. Thus, each solution of \eqref{linsys} is of the form $(LP_n,LQ_n)$, where $(P_n,Q_n)$ is the unique solution of minimal degree. Hereafter, $(P_n,Q_n)$ will always stand for this unique pair of polynomials. A \emph{diagonal Pad\'e approximant} to $f$ of type $(n,n)$, denoted by $[n/n]_f$, is defined as $[n/n]_f:=P_n/Q_n$. 

We say that a function $f$ of the form \eqref{f} belongs to the class $\Alg$ if it has a meromorphic continuation along any arc originating at infinity that belongs to $\C\setminus E_f$, $\cp(E_f)=0$, and some points in $\C\setminus E_f$ do possess distinct continuations.\footnote{$\cp(\cdot)$ stands for logarithmic capacity \cite{Ransford}.} Given $f\in\Alg$, a compact set $K$ is called \emph{admissible} if $\overline\C\setminus K$ is connected and $f$ has a meromorphic and single-valued extension there. The following theorems summarize one of the fundamental contributions of Herbert Stahl to complex approximation theory \cite{St85,St85b,St86,St97}.

\begin{st}
Given $f\in\Alg$, there exists the unique admissible compact $\Delta_f$ such that $\cp(\Delta_f)\leq\cp(K)$ for any admissible compact $K$ and $\Delta_f\subseteq K$ for any admissible $K$ satisfying $\cp(\Delta_f)=\cp(K)$. Furthermore, Pad\'e approximants $[n/n]_f$ converge to $f$ in logarithmic capacity in $D_f:=\overline\C\setminus\Delta_f$. The domain $D_f$ is optimal in the sense that the convergence does not hold in any other domain $D$ such that $D\setminus D_f\neq\varnothing$.
\end{st}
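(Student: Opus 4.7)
The plan is to prove Stahl's theorem in four stages: first, establish existence of a minimal-capacity admissible compact $\Delta_f$; second, derive the \emph{S-property} that characterizes it; third, translate this S-property into a statement about the asymptotic zero distribution of the Padé denominators $Q_n$; and finally, deduce convergence of $[n/n]_f$ in $D_f$ together with optimality of that domain.

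For existence, let $\mathcal{K}$ denote the family of admissible compacts and set $c := \inf\{\cp(K) : K \in \mathcal{K}\}$. Because $f\in\Alg$ is genuinely multi-valued, every admissible $K$ must separate the branch points $E_f$ from infinity, which forces $c>0$. Using lower semicontinuity of capacity under Hausdorff convergence (and verifying that a limit of admissible compacts of bounded capacity remains admissible, an argument in which the assumption $\cp(E_f)=0$ is essential to absorb any spurious limit points), extract a minimizer $\Delta_f$. Uniqueness, together with the inclusion $\Delta_f\subseteq K$ for any other minimizer $K$, follows from strict convexity of the logarithmic energy functional: unless $K\supseteq\Delta_f$, the union $\Delta_f\cup K$ is itself admissible and has strictly smaller capacity, contradicting minimality.

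Next, at any interior point of a smooth arc of $\Delta_f$, a local normal variation preserving admissibility combined with computation of the first variation of $\cp$ forces equality of the outward normal derivatives of the equilibrium potential $U^{\omega_{\Delta_f}}$ on the two sides of the arc; this is the S-property. On the Padé side, deforming the defining relation \eqref{linsys} from a large circle down to $\Delta_f$ yields the non-Hermitian orthogonality $\int_{\Delta_f} z^k Q_n(z)\,d\rho(z)=0$ for $k=0,\ldots,n-1$, where $\rho$ encodes the jump of $f$ across $\Delta_f$. Passing to weak-$*$ subsequential limits $\nu$ of the normalized zero-counting measures of $Q_n$ and applying potential-theoretic comparison against $\omega_{\Delta_f}$, the S-property forces $\nu = \omega_{\Delta_f}$. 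From this, one obtains the $n$-th root asymptotics $|R_n|^{1/n}\to\exp\bigl(U^{\omega_{\Delta_f}}-\log(1/\cp(\Delta_f))\bigr)$ in capacity on compact subsets of $D_f$, giving convergence of $[n/n]_f$ to $f$ in capacity there. Optimality of $D_f$ follows because enlarging the domain would provide an admissible compact of strictly smaller capacity, contradicting minimality of $\cp(\Delta_f)$.

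The main obstacle is the third step: recovering the zero distribution of polynomials defined by non-Hermitian orthogonality, where positivity of the integrating measure is lost and the classical Christoffel--Darboux machinery is unavailable. The core technical issue is ruling out escape of zero mass to $\C\setminus\Delta_f$ in the weak-$*$ limit; here the S-property enters decisively, as the variational condition that balances the logarithmic potentials from the two sides of $\Delta_f$ and thereby pins down the unique limiting measure.
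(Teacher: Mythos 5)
First, a point of context: the paper does not prove this statement at all --- it is quoted as background and attributed to Stahl \cite{St85,St85b,St86,St97} --- so there is no internal proof to compare against. Your outline is essentially a compressed roadmap of the known Stahl/Gonchar--Rakhmanov strategy (minimal-capacity extremal set, S-property by variation, non-Hermitian orthogonality of $Q_n$, $n$-th root asymptotics in capacity), which is the right circle of ideas. But as a proof it has concrete gaps.

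The most tangible error is in the uniqueness/minimality step: you argue that if $K$ is another minimizer not containing $\Delta_f$, then ``the union $\Delta_f\cup K$ is admissible and has strictly smaller capacity.'' Capacity is monotone, so $\cp(\Delta_f\cup K)\geq\max\{\cp(\Delta_f),\cp(K)\}$; the union can never help. Presumably you meant the intersection, but that is exactly where the difficulty lies: the complement of $\Delta_f\cap K$ is the union of the two extremal domains, and the single-valued branches of $f$ defined on each domain need not glue to a single-valued function on that union (they agree only on the component of the overlap containing $\infty$), so admissibility of the intersection is not automatic and ``strict convexity of the energy'' is not a substitute for Stahl's actual uniqueness argument. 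Second, the step you yourself flag as the main obstacle --- deducing that every weak-$*$ limit of the normalized zero counting measures of $Q_n$ equals the equilibrium measure, with no mass escaping to $\overline\C\setminus\Delta_f$, from \emph{non-Hermitian} orthogonality --- is precisely the core technical content of Stahl's papers (and of the Gonchar--Rakhmanov method); asserting that ``the S-property forces $\nu=\omega_{\Delta_f}$'' is a statement of the goal, not an argument. (Also, for general $f\in\Alg$ the jump of $f$ across $\Delta_f$ need not define a measure with a density, so even writing the orthogonality relation requires care.) Finally, the optimality of $D_f$ does not follow from ``enlarging the domain would give an admissible compact of smaller capacity'': non-convergence in any larger domain is proved by showing that the poles of $[n/n]_f$ accumulate, in the appropriate capacity sense, on all of $\Delta_f$, together with lower bounds on the error; this is additional work, not a corollary of set minimality. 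As it stands, the proposal is a reasonable survey of the architecture of Stahl's theorem, but each of the three pillars (uniqueness of the extremal set, the zero-distribution theorem for non-Hermitian orthogonality, and the divergence statement) is either missing or argued incorrectly.
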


The minimal capacity set $\Delta_f$, the boundary of the extremal domain $D_f$, has a rather special structure.
\begin{st}
It holds that
\[
\Delta_f = E_0 \cup E_1 \cup \bigcup\Delta_j,
\]
where $E_0\subseteq E_f$, $E_1$ consists of isolated points to which $f$ has unrestricted continuations from the point at infinity leading to at least two distinct function elements, and $\Delta_j$ are open analytic arcs. 
\end{st}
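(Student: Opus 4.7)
The plan is to derive the structural decomposition from the extremal characterization of $\Delta_f$ furnished by the previous theorem, combined with a variational (S-property) argument applied to the equilibrium measure $\mu_f$ of $\Delta_f$.

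First I would separate the ``forced'' part of $\Delta_f$ from the ``free'' part. Since $\overline\C\setminus\Delta_f$ is connected and $f$ extends meromorphically and single-valuedly there, every point $x\in\Delta_f$ that is not locally approached from two distinct sides by the complementary domain must carry a genuine obstruction to further shrinking $\Delta_f$: otherwise a small normal deformation would produce an admissible compact of strictly smaller capacity. If $x\in E_f$ it contributes to $E_0$; otherwise $f$ possesses at least two distinct continuations from infinity that meet at $x$, and single-valuedness of $f$ on the complement forces $x$ to be isolated in $\Delta_f$, contributing to $E_1$. The residual part $\Delta_f^\circ:=\Delta_f\setminus(E_0\cup E_1)$ then consists entirely of genuinely two-sided boundary points.

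For $\Delta_f^\circ$ I would establish the S-property of $\mu_f$, namely that the inward normal derivatives of the logarithmic potential $U^{\mu_f}$ from the two sides coincide almost everywhere. This is the first-variation condition for the minimal-capacity problem: a smooth normal deformation of a short sub-arc of $\Delta_f^\circ$ which keeps the compact admissible produces a one-parameter family of competitors, and minimality forces the first-order change in capacity to vanish, which is precisely the balance of normal derivatives. The S-property is equivalent to $(C^{\mu_f})_+^2=(C^{\mu_f})_-^2$ on $\Delta_f^\circ$, where $C^{\mu_f}$ is the Cauchy transform of $\mu_f$. Hence $-(C^{\mu_f})^2$ continues analytically across $\Delta_f^\circ$ to a function $Q$ defined and meromorphic in a neighborhood of $\Delta_f\setminus(E_0\cup E_1)$, and $\Delta_f^\circ$ locally coincides with the set where $Q(z)\,dz^2>0$, i.e., with horizontal trajectories of the quadratic differential $Q(z)\,dz^2$. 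Such trajectories are real-analytic away from the zeros of $Q$, which yields the decomposition of $\Delta_f^\circ$ into open analytic arcs $\Delta_j$.

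The hard part will be making the variational step rigorous. A priori $\Delta_f$ is only a compact set with connected complement, and to speak of ``two sides'', ``normal direction'', and ``one-sided normal derivatives'' one needs fine potential-theoretic regularity on $\Delta_f^\circ$. I would follow Stahl in showing first that $\supp(\mu_f)=\Delta_f$ and that $U^{\mu_f}$ is continuous, and then that on each sub-arc of $\Delta_f^\circ$ the measure $\mu_f$ has a smooth density and $U^{\mu_f}$ admits bounded one-sided normal derivatives. A second subtle point is preservation of admissibility under the normal deformation: the deformed compact must still obstruct every branch of the multivalued continuation of $f$, which forces the variation to be implemented symmetrically on the two sides of the arc, and it is exactly this pairing that upgrades the one-sided inequality into the S-type equality. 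Finally, local finiteness of the family $\{\Delta_j\}$ follows from the discreteness of the zeros of $Q$ in its domain of meromorphy, so $\Delta_f\setminus(E_0\cup E_1)$ decomposes into an at-most-countable union of analytic arcs, as claimed.
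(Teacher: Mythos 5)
There is a genuine gap, and it is worth noting first that the paper itself offers no proof of this statement: it is quoted as background from Stahl's original papers, so your proposal has to be measured against Stahl's own argument. The central defect of your plan is circularity. The variational derivation of the S-property that you make the engine of the proof presupposes exactly the structure that the theorem asserts: to speak of ``two sides'' of $\Delta_f^\circ$, to deform ``a short sub-arc'' in the normal direction, and to invoke a smooth density of $\mu_f$ with bounded one-sided normal derivatives of $U^{\mu_f}$, you must already know that $\Delta_f\setminus(E_0\cup E_1)$ is locally a smooth Jordan arc. A priori $\Delta_f$ is only a compact set of minimal capacity with connected complement, and nothing rules out locally very irregular (e.g.\ Cantor-like) behavior; your own third paragraph concedes this and then proposes to ``follow Stahl in showing \dots that on each sub-arc of $\Delta_f^\circ$'' the measure is smooth --- but the existence of such sub-arcs is precisely the content of the theorem. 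The hard core of Stahl's structure theorem is establishing local analyticity of the free boundary \emph{without} any a priori regularity, and his argument is global and of a different nature; the quadratic-differential/trajectory description and the symmetry of normal derivatives (the third and fourth theorems in the paper's list) are formulated \emph{after} the arc structure is available, not used to obtain it. Likewise, the claim that admissibility is preserved under the normal deformation and that this ``forces the variation to be implemented symmetrically'' is asserted rather than proved, and it is exactly where the interplay between capacity extremality and the multivalued continuation of $f$ has to be handled.

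The classification of the exceptional points is also not correct as stated. You argue that if $x\in\Delta_f\setminus E_f$ admits two distinct continuations of $f$ from infinity, then single-valuedness of $f$ in $D_f$ forces $x$ to be an isolated point of $\Delta_f$. That is false: a Chebotarev-type junction, where three (or more) analytic arcs of $\Delta_f$ meet, is a point not in $E_f$ at which continuation from infinity through the different complementary sectors produces distinct function elements; such a point belongs to $E_1$ but is certainly not isolated in $\Delta_f$ (compare Definition~\ref{SContour}, where the elements of $E_1$ are endpoints of at least three arcs). The word ``isolated'' in the statement refers to these points being isolated branch-type singularities of the continued function, not to their being isolated points of the compact set $\Delta_f$. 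So even granting the regularity you assume, the dichotomy in your first paragraph does not deliver the asserted decomposition $\Delta_f=E_0\cup E_1\cup\bigcup\Delta_j$.
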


Moreover, the set $\Delta_f$ possesses Stahl's symmetry property.

\begin{st}
It holds that
\[
\frac{\partial g_{\Delta_f}}{\partial \boldsymbol{n}^+} = \frac{\partial g_{\Delta_f}}{\partial \boldsymbol{n}^-} \quad \text{on} \quad \bigcup \Delta_j,
\]
where $\partial/\partial\boldsymbol{n}^\pm$ are the one-sided normal derivatives on $\bigcup \Delta_j$ and $g_{\Delta_f}$ is the Green's function with pole at infinity for $D_f$.
\end{st}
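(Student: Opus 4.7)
The proof is variational: the S-property is the first-order optimality condition for the capacity-minimization problem that characterizes $\Delta_f$. I would fix a point $z_0$ in the relative interior of one of the open analytic arcs $\Delta_j$, bounded away from $E_0\cup E_1$ and from the other arcs, and prove that the two one-sided normal derivatives of $g_{\Delta_f}$ agree at $z_0$; as $z_0$ is arbitrary, the conclusion on all of $\bigcup\Delta_j$ follows.

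I would choose a small disk $U$ centered at $z_0$ such that $\Delta_f\cap U=\Delta_j\cap U$ is a single smooth analytic arc; since $f$ extends meromorphically and single-valuedly to the connected set $\overline\C\setminus\Delta_f$, it admits single-valued meromorphic continuations to $U$ from both sides of this arc. For a smooth vector field $h$ compactly supported in $U$, set $\phi_t=\mathrm{id}+th$ and $\Delta_f^t:=\phi_t(\Delta_f)$. For $|t|$ small the perturbation preserves the topology of the complement and the homotopy classes of loops around each connected component, so $\overline\C\setminus\Delta_f^t$ is connected, $f$ is still single-valued there, and $\Delta_f^t$ is admissible in the sense of Stahl's first theorem.

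The key analytic input is Hadamard's variational formula for the Robin constant $\gamma_t:=-\log\mathrm{cap}(\Delta_f^t)$. Because both sides of the arc lie in the exterior, both contribute to the first variation and, up to a universal constant, one obtains
\begin{equation*}
\frac{d\gamma_t}{dt}\bigg|_{t=0}=\frac{1}{2\pi}\int_{\Delta_j\cap U}h_n(s)\left[\left(\frac{\partial g_{\Delta_f}}{\partial\boldsymbol{n}^+}\right)^{\!2}-\left(\frac{\partial g_{\Delta_f}}{\partial\boldsymbol{n}^-}\right)^{\!2}\right]\!ds,
\end{equation*}
where $h_n$ is the signed normal component of $h$ along $\Delta_j$. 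By Stahl's first theorem, $\Delta_f$ minimizes capacity over admissible compacts, so $\gamma_t$ is maximized at $t=0$ and the derivative above vanishes. Since $h_n$ can be any smooth function compactly supported in $\Delta_j\cap U$, the bracketed factor must be identically zero there. Nonnegativity of each $\partial g_{\Delta_f}/\partial\boldsymbol{n}^\pm$ (the Green's function is positive in $D_f$ and vanishes on $\Delta_f$) then upgrades equality of squares to equality of the normal derivatives themselves.

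The delicate step is the rigorous derivation of Hadamard's formula in this setting. The standard proof assumes the support is a finite union of smooth arcs with isolated endpoints, whereas $\Delta_f$ can have a complicated graph-like structure at $E_1$, and the equilibrium density may have integrable singularities at those branch points. Localizing the deformation strictly inside a single arc, as above, avoids the junctions altogether and reduces the claim to a standard first-variation computation for a compactly-supported perturbation of a smooth analytic arc; nonetheless the identity must be established from potential-theoretic first principles because $\Delta_f^t$ is not globally the boundary of a smooth domain. The admissibility verification for $\Delta_f^t$, in contrast, is routine once $U$ is taken small enough to preserve both connectedness of the complement and the monodromy of $f$.
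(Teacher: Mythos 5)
First, a point of comparison: the paper does not prove this statement at all. It is quoted as one of Stahl's theorems, with the proof delegated to the cited works \cite{St85,St85b,St86,St97}, so there is no in-paper argument to measure your proposal against. What you propose is the standard \emph{variational} derivation of the S-property (first-order stationarity of $\cp(\Delta_f^t)$ under local normal perturbations of a single arc, via Hadamard's formula), which is essentially the route of Perevoznikova--Rakhmanov and of Rakhmanov's later work on S-curves, not Stahl's original argument; Stahl works in the general $\cp(E_f)=0$ setting and obtains symmetry and structure in an intertwined way. Within this paper's ordering, where the structure theorem (open analytic arcs $\Delta_j$) is stated before the symmetry property, your use of local analyticity of the arc at $z_0$ is legitimate and not circular, and your endgame is fine: vanishing of the first variation for every normal component $h_n$ supported in $\Delta_j\cap U$ forces the bracket to vanish, and positivity of $g_{\Delta_f}$ in $D_f$ together with its vanishing on $\Delta_f$ upgrades equality of the squares to equality of the one-sided normal derivatives.

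The genuine gap is the one you yourself flag and then leave open: the differentiability of $t\mapsto\cp(\Delta_f^t)$ and the validity of the two-sided Hadamard formula for a slit that is approached from both banks, with a priori different normal derivatives on the two sides. This is not a citation to the classical smooth-boundary Hadamard theorem; establishing it (for instance through the explicit conformal or potential-theoretic representation of $g_{\Delta_f}$ near the deformed subarc, with uniform control of the error in $t$) is the analytic heart of the proof, so as written the argument is a plan rather than a proof. Two smaller points also need to be nailed down rather than called routine. Admissibility of $\Delta_f^t$ requires that $f$ continue meromorphically across $\Delta_j\cap U$ from both sides; this is true but is not literally contained in the structure theorem as quoted---you need to choose $z_0\notin E_f$ and $U$ disjoint from the closed capacity-zero set $E_f$, and then argue single-valuedness of the extension to $\overline\C\setminus\Delta_f^t$ from the homotopy of the two contours in $\overline\C\setminus E_f$. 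And since $\Delta_f^t$ is admissible for both signs of $t$, you should say explicitly that the minimum of $\cp(\Delta_f^t)$ at $t=0$ is interior in $t$, which is what makes the derivative (once it exists) vanish rather than merely have a sign.
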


Finally, the arcs $\Delta_j$ can be described as trajectories of a certain quadratic differential.

\begin{st}
Let $h_{\Delta_f}(z)=2\partial_zg_{\Delta_f}(z)$, where $2\partial_z:=\partial_x-\mathrm{i}\partial_y$. The function $h_\Delta^2$ is holomorphic in $D_f$, has a zero of order 2 at infinity, and the arcs $\Delta_k$ are negative critical trajectories of the quadratic differential $h_{\Delta_f}^2(z)\mathrm{d}z^2$. That is, for any smooth parametrization $z(t):(0,1)\to\Delta_j$ it holds that $h_{\Delta_f}^2(z(t))\big(z^\prime(t)\big)^2< 0$ for all $t\in(0,1)$.
\end{st}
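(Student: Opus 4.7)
The plan is to derive the three assertions from the already-established Stahl theorems --- especially the symmetry property --- together with standard facts about harmonic functions and Wirtinger calculus.

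I would begin with the global analytic structure. Since $g_{\Delta_f}$ is harmonic in $D_f$, the function $h_{\Delta_f}=2\partial_z g_{\Delta_f}$ satisfies $\partial_{\bar z}h_{\Delta_f}=\tfrac12\Delta g_{\Delta_f}=0$, so $h_{\Delta_f}$, and hence $h_{\Delta_f}^2$, is holomorphic in $D_f$. For the behavior at infinity I would use the standard expansion $g_{\Delta_f}(z)=\log|z|-\log\cp(\Delta_f)+O(1/|z|)$; applying $2\partial_z$ gives $h_{\Delta_f}(z)=1/z+O(1/z^2)$, so $h_{\Delta_f}^2(z)=1/z^2+O(1/z^3)$, which is precisely a zero of order $2$ at infinity.

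The core of the argument is the trajectory condition. Fix an arc $\Delta_j$, a smooth parametrization $z(t):(0,1)\to\Delta_j$, and the unit tangent $\tau(t)=z'(t)/|z'(t)|$. Since $g_{\Delta_f}$ vanishes identically on $\Delta_f$ taken as the boundary of $D_f$, its tangential derivative along $\Delta_j$ vanishes from either side, which in complex form reads $\mathrm{Re}\bigl(\tau h_{\Delta_f}^\pm\bigr)=0$, where $h_{\Delta_f}^\pm$ denote the one-sided boundary values of $h_{\Delta_f}$. Hence $\tau h_{\Delta_f}^\pm$ is purely imaginary and $\tau(t)^2\bigl(h_{\Delta_f}^\pm(z(t))\bigr)^2\le 0$. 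Furthermore $|h_{\Delta_f}^\pm|=|\partial g_{\Delta_f}/\partial\boldsymbol{n}^\pm|$, so Stahl's symmetry property forces $(h_{\Delta_f}^+)^2=(h_{\Delta_f}^-)^2$ on $\Delta_j$; by Morera's theorem $h_{\Delta_f}^2$ then extends holomorphically across $\Delta_j$, so $h_{\Delta_f}^2(z(t))$ is a well-defined single value and the product $h_{\Delta_f}^2(z(t))\bigl(z'(t)\bigr)^2=|z'(t)|^2\tau^2\bigl(h_{\Delta_f}^\pm\bigr)^2\le 0$.

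The main obstacle I anticipate is the \emph{strict} inequality on the open arcs, which amounts to ruling out interior zeros of $h_{\Delta_f}^2$ on each $\Delta_j$. These would correspond to points where the normal derivative of $g_{\Delta_f}$ vanishes, i.e., zeros of the density of the equilibrium measure on $\Delta_f$. Since $h_{\Delta_f}^2$ extends holomorphically across $\Delta_j$ and is not identically zero --- the expansion at infinity shows $z^2 h_{\Delta_f}^2\to 1$ --- its zeros on $\Delta_j$ are isolated. I would then appeal to the maximality of the arcs in Stahl's second theorem: such isolated zeros can occur only at points where several arcs of $\Delta_f$ meet or an arc terminates, i.e., at the endpoints of each $\Delta_j$, yielding strict negativity on the relative interior of every arc.
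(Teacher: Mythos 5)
The paper itself offers no proof of this statement---it is quoted as Stahl's theorem with references to \cite{St85,St85b,St86,St97}---so your attempt can only be judged against the standard arguments. Most of it is sound: holomorphy of $h_{\Delta_f}$ in $D_f$ from harmonicity of $g_{\Delta_f}$, the expansion $h_{\Delta_f}(z)=1/z+O(z^{-2})$ at infinity, and the non-strict inequality on the arcs, which you correctly derive from the vanishing of the tangential derivative (so $\tau h^\pm$ is purely imaginary), from the S-property giving $|h^+|=|h^-|$ and hence $(h^+)^2=(h^-)^2$, and from the continuation of $h^2$ across each $\Delta_j$ that makes $h_{\Delta_f}^2(z(t))$ a well-defined single value (a minor point you gloss over: continuity, indeed analyticity, of $g$ and of $h^\pm$ up to each side of an open arc should be justified by Schwarz reflection across the analytic arc before Morera is invoked).

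The genuine gap is the strict inequality. Your appeal to ``maximality of the arcs'' is an assertion, not an argument: nothing in the structure theorem you quote forbids $h_{\Delta_f}^2$ from vanishing at an interior point of some $\Delta_j$. In particular, the implicit claim that a critical point of the quadratic differential cannot be an interior point of a smooth trajectory arc is false for zeros of even order: at a zero of order $2j$ the trajectory rays are spaced by angle $\pi/(j+1)$ and two of them are collinear, so a trajectory passes smoothly straight through such a point (this is precisely the situation at the points of $E_2$ in the Perevoznikova--Rakhmanov formula for $h_\Delta^2$, and you have not excluded a priori that such points lie on the arcs). What is actually needed is strict positivity of the one-sided normal derivatives of $g_{\Delta_f}$ on the open arcs, i.e.\ non-vanishing of the equilibrium density there. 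The standard tool is the Hopf boundary point lemma: $g_{\Delta_f}$ is harmonic and strictly positive in $D_f\setminus\{\infty\}$, vanishes on $\Delta_j$, and every interior point of the analytic arc admits an interior tangent disk from either side, whence $\partial g_{\Delta_f}/\partial\boldsymbol{n}^\pm>0$ there, so $|h^\pm|>0$ and the inequality is strict. Equivalently, if $h^+(z_0)=0$ at an interior point $z_0$ of $\Delta_j$, the Schwarz reflection of $g_{\Delta_f}$ across the arc is harmonic near $z_0$ and has a critical zero there, so its zero set consists of at least four analytic rays through $z_0$; two of them lie along $\Delta_j$, and the remaining ones enter $D_f$, contradicting $g_{\Delta_f}>0$ there. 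With either completion in place of your last paragraph, the proof is correct.
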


If now $f\in\mathcal{S}$ is an algebraic function, then the set $E_f$ is finite and so is the collection $\bigcup\Delta_j$. This motivated the following definition.

\begin{definition}
\label{SContour}
A compact set $\Delta$ is called an algebraic S-contour if the complement of $\Delta$, say $D$, is connected,
\[
\Delta=E_0\cup E_1\cup\bigcup \Delta_j,
\]
where $\bigcup \Delta_j$ is a finite union of open analytic arcs, $E_0\cup E_1$ is a finite set of points such that each element of $E_0$ is an endpoint of exactly one arc $\Delta_j$ while each element of $E_1$ is an endpoint of at least three arcs, and
\[
\frac{\partial g_\Delta}{\partial\boldsymbol{n}^+} = \frac{\partial g_\Delta}{\partial\boldsymbol{n}^-} \quad \mbox{on} \quad \bigcup \Delta_j,
\]
where $g_\Delta$ is the Green's function for $D$ with pole at infinity.
\end{definition}

Any algebraic S-contour is a minimal capacity contour for some algebraic function $f$. Given $\Delta$, an eligible function $f_\Delta\in\Alg$ can be constructed in the following way. Denote by $m$ the number of connected components of $\Delta$, by $E_{0j}$ the intersection of $E_0$ with the $j$-th connected component, and by $m_j$ the cardinality of $E_{0j}$. Then one can take $f_\Delta(z)=\sum_{j=1}^m\big(\prod_{e\in E_{0j}}(z-e)\big)^{-1/m_j}$.

Algebraic S-contours admit a description via critical trajectories of \emph{rational} quadratic differentials.  For such a contour $\Delta$, set
\begin{equation}
\label{h}
h_\Delta(z) := 2\partial_z g_\Delta(z).
\end{equation}
For each point $e\in E_0\cup E_1$ denote by $i(e)$ the \emph{bifurcation index} of $e$, that is, the number of different arcs $\Delta_j$ incident with $e$. It follows immediately from the definition of an algebraic S-contour that $i(e)=1$ for $e\in E_0$ and $i(e)\geq3$ for $e\in E_1$. Denote also by $E_2$ the set of \emph{critical points} of $g_\Delta$ with $j(e)$ standing for the \emph{order} of $e\in E_2$, i.e., $\partial_z^jg_D(e)=0$ for $j\in\{1,\ldots,j(e)\}$ and $\partial_z^{j(e)+1}g_D(e)\neq0$. The set $E_2$ is necessarily finite.

\begin{prt}
Let $\Delta$ be an algebraic S-contour. Then the arcs $\Delta_k$ are negative critical trajectories of the quadratic differential $h_\Delta^2(z)\mathrm{d}z^2$. Moreover,
\[
h_\Delta^2(z) = \prod_{e\in E_0\cup E_1}(z-e)^{i(e)-2}\prod_{e\in E_2}(z-e)^{2j(e)}
\]
and $h_\Delta^2(z)=z^{-2}+O\big(z^{-3}\big)$ as $z\to\infty$.
\end{prt}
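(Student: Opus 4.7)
\medskip\noindent\textbf{Proof proposal.}
The plan combines the single-valued holomorphy of $h_\Delta=2\partial_zg_\Delta$ on $D$ with the S-property. Since $g_\Delta$ is harmonic and single-valued on $D$, the complex derivative $h_\Delta=\partial_xg_\Delta-\mathrm{i}\partial_yg_\Delta$ is holomorphic and single-valued there, and so is $h_\Delta^2$. From the normalization $g_\Delta(z)=\log|z|+\gamma+o(1)$ at infinity I read off $h_\Delta(z)=1/z+O(z^{-2})$, hence $h_\Delta^2(z)=z^{-2}+O(z^{-3})$, which proves the stated asymptotic at infinity.

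Next I extend $h_\Delta^2$ across each arc $\Delta_j$. The identity $g_\Delta\equiv0$ on $\Delta_j$ kills the tangential derivative, so $\nabla g_\Delta$ is purely normal from each side; because the outward unit normals from the two sides are opposite vectors, the S-property $\partial g_\Delta/\partial\boldsymbol{n}^+=\partial g_\Delta/\partial\boldsymbol{n}^-$ forces $h_\Delta^+=-h_\Delta^-$ on $\Delta_j$, whence $(h_\Delta^+)^2=(h_\Delta^-)^2$. Morera's theorem then shows that $h_\Delta^2$ extends holomorphically across every $\Delta_j$, so it is holomorphic on $\overline\C\setminus(E_0\cup E_1)$.

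The third step is the local analysis at the distinguished points. At a tip $e\in E_0$, the classical slit-domain behaviour of a Green's function gives $g_\Delta(z)\sim c\,\re\sqrt{z-e}$, so $h_\Delta^2$ has a simple pole at $e$, matching the exponent $i(e)-2=-1$. At a junction $e\in E_1$ with $k=i(e)\geq3$, the S-property combined with the fact that each of the $k$ incident arcs is a level curve of $g_\Delta$ forces the local expansion $g_\Delta(z)\sim c\,\re\bigl[(z-e)^{k/2}\bigr]$ in a suitable local coordinate, yielding a zero of $h_\Delta^2$ of order $k-2=i(e)-2$. At an interior critical point $e\in E_2$ the order $j(e)$ of vanishing of $h_\Delta$ is built into the definition of $E_2$, so $h_\Delta^2$ vanishes there to order $2j(e)$. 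Collecting these facts, $h_\Delta^2$ is meromorphic on $\overline\C$ with precisely the divisor in the statement, hence equal to the claimed product up to a constant, which Step~1 pins down to $1$. Finally, parametrizing an arc by $z(t)$ and differentiating $g_\Delta(z(t))=0$ gives $\re\bigl(h_\Delta(z(t))\,z'(t)\bigr)=0$, so $h_\Delta\,z'$ is purely imaginary along the arc and $h_\Delta^2(z(t))(z'(t))^2<0$ wherever $h_\Delta\neq0$, which is everywhere in the relative interior of the arcs by the preceding analysis.

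The most delicate point is the local analysis at the bifurcation points $e\in E_1$: one must show that the $k\geq3$ analytic arcs meeting at $e$, together with the S-property, force precisely the angular configuration corresponding to $g_\Delta\sim c\,\re[(z-e)^{k/2}]$, with no anomalous logarithmic or fractional terms of the wrong exponent. This is essentially a local uniformization argument that uses the analyticity of the arcs and both boundary conditions (vanishing of $g_\Delta$ and the equality of normal derivatives) simultaneously, and it is the place where the hypotheses of Definition~\ref{SContour} are genuinely used.
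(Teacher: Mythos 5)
First, a point of comparison: the paper does not prove this statement at all --- it is quoted as a known theorem with the citation \cite{uPerevRakh} --- so your proposal can only be measured against the standard proof in the literature, and indeed your outline follows that standard route: $h_\Delta=2\partial_z g_\Delta$ is holomorphic in $D$ with $h_\Delta(z)=1/z+O(z^{-2})$ at infinity; the S-property plus the vanishing of $g_\Delta$ on each analytic arc gives $h_\Delta^+=-h_\Delta^-$, hence $(h_\Delta^2)^+=(h_\Delta^2)^-$ and holomorphic continuation of $h_\Delta^2$ across $\bigcup\Delta_j$; then a local analysis at $E_0\cup E_1$ and a Liouville-type argument pin down the rational function, and $\re\big(h_\Delta(z(t))z'(t)\big)=0$ along the arcs gives the trajectory statement. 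Steps 1, 2 and the trajectory computation are correct as written.

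The gaps are exactly where you say the difficulty lies, but they are genuine and not merely ``delicate'': (a) you assert, rather than prove, that the isolated singularities of $h_\Delta^2$ at the points of $E_0\cup E_1$ are at most poles with the local models $g_\Delta\sim c\,\re\sqrt{z-e}$ (tips) and $g_\Delta\sim c\,\re\big[(z-e)^{i(e)/2}\big]$ (junctions); this is precisely the content of the exponent $i(e)-2$, and without it an essential singularity or a higher-order pole is not excluded. The standard way to close this is not a vague uniformization appeal but a level-set argument: reflect $g_\Delta$ harmonically across each incident analytic arc, use that the zero set of a harmonic function near a point where its $\partial_z$-derivative vanishes to order $m$ consists of exactly $m+2$ arcs emanating from the point (equivalently $m+1$ analytic curves through it), and use positivity of $g_\Delta$ in $D$; since exactly $i(e)$ branches of $\{g_\Delta=0\}$ emanate from $e$, the order of $h_\Delta^2$ at $e$ must be $i(e)-2$, and in particular the singularity is at worst a simple pole (the case $i(e)=1$). (b) Equality in the product formula also requires that $h_\Delta^2$ has no zeros at interior points of the arcs $\Delta_j$: zeros in $D$ belong to $E_2$ by definition, but nothing in your argument excludes zeros on the arcs themselves. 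The same reflection/level-set argument does it --- near an interior point of $\Delta_j$ the reflected extension of $g_\Delta$ is positive on one side and negative on the other, so its zero set is the single curve $\Delta_j$ and hence $h_\Delta^\pm\neq0$ there. Note that your closing sentence (``$h_\Delta\neq0$ in the relative interior of the arcs by the preceding analysis'') is circular unless (b) is supplied independently, since the only way your preceding analysis locates zeros is through the product formula you are trying to prove. With (a) and (b) in place, $h_\Delta^2$ is rational with the stated divisor, and the normalization at infinity fixes the multiplicative constant to $1$, as you indicate.
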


The reason to restrict our attention from all possible S-contours to the algebraic ones is that one might hope for a stronger convergence than convergence in capacity. Indeed, it was suggested by Nuttall \cite{Nut84} that if
\begin{equation}
\label{f_rho}
f_\rho(z) := \frac1{2\pi\mathrm{i}}\int_\Delta\frac{(\rho/w_\Delta^+)(t)}{t-z}\mathrm{d}t, \quad z\in \overline\C\setminus\Delta,
\end{equation}
where $\rho$ is a H\"older continuous and non-vanishing  function on an algebraic S-contour $\Delta$ and
\begin{equation}
\label{w_Delta}
w_\Delta^2(z) := \prod_{e\in E_\Delta}(z-e)
\end{equation}
with $E_\Delta$ being the subset of $E_0\cup E_1$ consisting of points having odd bifurcation index, then the diagonal Pad\'e approximants $[n/n]_{f_\rho}$ converge to $f_\rho$ \emph{``nearly''} uniformly in $D$ (uniformly if $\Delta$ is an interval). The absence of the uniform convergence is due to the presence of a finite number of \emph{``wandering''} or \emph{``spurious''} poles \cite{Gon82,St98}, see the discussion after Theorem~\ref{thm:main} further below. The presence of these poles was already observed by Akhiezer \cite[Section~53]{Akhiezer} and \cite{Akh60}, who considered the case of $\Delta$ being a union of several real intervals and $\rho$ being a positive polynomial on $\Delta$ (the so-called Bernstein-Szeg\H{o} case). Nuttall himself, in the joint work with Singh \cite{NutS77}, extended Akhiezer's method to an arbitrary algebraic S-contour and an arbitrary non-vanishing polynomial (getting rid of positivity). Later, Nuttall showed the validity of his claim on an interval \cite{Nut90} using the method of the singular integral equations. With the help of this method, Nuttall's claim has been verified by Suetin \cite{Suet00,Suet03} when $\Delta$ is a disjoint union of analytic arcs and by Baratchart and the author \cite{BY13} when $\Delta$ is a union of three arcs meeting at one point. Mart\'inez Finkelshtein, Rakhmanov, and Suetin also considered the case of connected algebraic S-contours and semi-classical weights using WKB analysis \cite{M-FRakhSuet12}. In this note we prove Nuttall's theorem on an arbitrary algebraic S-contour but only when $\rho$ in \eqref{f_rho} is holomorphic and non-vanishing in a neighborhood of $\Delta$.  The proof of the full Nuttall's theorem will appear elsewhere \cite{uY3}. 

This note is complimentary to \cite{uAY} by Aptekarev and the author, where the same problem is considered but it is only required that $\rho$ is holomorphic across each $\Delta_j$ and can vanish or blow up at the points of $E_0\cup E_1$. However, \cite{uAY} places the restriction on the algebraic S-contours requiring the bifurcation index $i(e)$ to be either 1 or 3 (no such restriction is placed here). This note as well as \cite{uAY} use the matrix Riemann-Hilbert approach that requires local analysis around the points in $E_0\cup E_1$ unless the weight is precisely as in \eqref{f_rho} with $\rho$ non-vanishing and holomorphic (this was first observed by Aptekarev and Van Assche for the case of an interval \cite{ApVA04}). This is the reason for the difference in assumptions between \cite{uAY} and this note.

This paper is organized as follows. In the next section we construct the Riemann surface of $h_\Delta$, which turns out to be the ``correct'' domain of definition for the functions describing the asymptotics of Pad\'e approximants. The latter functions are then introduced as solutions to a certain family of boundary value problems on the constructed surface. With these preliminaries out of the way, we prove the main result in last section using the matrix Riemann-Hilbert analysis.

\section{Boundary Value Problem}
\label{sec:bvp}

Fix an algebraic S-contour $\Delta$ with complement $D$ and let $h_\Delta$ be given by \eqref{h}.

\subsection{Riemann Surface} Denote by $\RS$ the Riemann surface defined by $h_\Delta$ or equivalently by $w_\Delta$. We represent $\RS$ as a two-sheeted ramified cover of $\overline\C$ constructed in the following manner. Two copies of $\overline\C$ are cut along each arc $\Delta_j$. These copies are clipped together at the elements of $E_\Delta\subseteq E_0\cup E_1$ (branch points of $h_\Delta$).  These copies are further glued together along the cuts in such a manner that the right (resp. left) side of the arc $\Delta_j$ belonging to the first copy, say $\RS^{(0)}$, is joined with the left (resp. right) side of the same arc $\Delta_j$ only belonging to the second copy, $\RS^{(1)}$. The genus of $\RS$, which we denote by $g$, satisfies the equality $2(g+1)=|E_\Delta|$.

According to the above construction, each arc $\Delta_j$ together with its endpoints corresponds to a cycle, say $\bd_j$, on $\RS$. We set $\bd:=\bigcup_j\bd_j$, denote by $\pi$ the canonical projection $\pi:\RS\to\overline\C$, and define
\[
D^{(k)}:=\RS^{(k)}\cap \pi^{-1}(D) \quad \mbox{and} \quad z^{(k)}:=D^{(k)}\cap\pi^{-1}(z)
\]
for $k\in\{0,1\}$ and $z\in D$. We further set $\boldsymbol E_\Delta:=\pi^{-1}(E_\Delta)$, which is comprised exactly of the ramification points of $\RS$. The cycles $\bd_j$ are oriented so that $D^{(0)}$ remains on the left when $\bd_j$ is traversed in the positive direction. We designate the symbol $\cdot^*$ to stand for the conformal involution acting on the points of $\RS$ that fixes the ramification points $\boldsymbol E_\Delta$ and sends $z^{(k)}$ into $z^{(1-k)}$, $k\in\{0,1\}$. We use bolds lower case letters such as $\z,\tr$ to indicate points on $\RS$ with canonical projections $z,t$.

Since $h_\Delta$ has only square root branching, each connected component of $\Delta$ contains even number of branch points. This allows us to number these points, $E_\Delta=\{e_0,e_1,\ldots,e_{2g+1}\}$, in the following fashion. If we consider $\partial D$ as a positively oriented Jordan curve (this way it contains two copies of each $\Delta_j$) and traverse it in the positive direction starting at $e_{2k}$, the next encountered branch point should be $e_{2k+1}$, $k\in\{1,\ldots,g\}$.

Denote by $\ualpha_k$, $k\in\{1,\ldots,g\}$, a smooth involution-symmetric, i.e.,  $\ualpha_k=\{\z^*|\z\in\ualpha_k\}$, Jordan curve that passes through $\e_1$ and $\e_{2k}$, and no other point of $\bd$ (until the end of the subsection we assume that $g\geq1$), which is oriented so that the positive direction in $D^{(0)}$ goes from $\e_1$ to $\e_{2k}$. We require that $\ualpha_k\cap\ualpha_j=\{\e_1\}$ for each pair $k\neq j$. We further denote by $\ubeta_k$ a smooth involution-symmetric Jordan curve that passes through $\e_{2k}$ and $\e_{2k+1}$ and is oriented so that at the point of intersection the tangent vectors to $\ualpha_k,\ubeta_k$ form the right pair. Again, we suppose that $\bd\cap\ubeta_k=\{\e_{2k},\e_{2k+1}\}$ and also assume that $\ubeta_j$ has empty intersection with any cycle $\ugamma\in\big\{\ualpha_k,\ubeta_k\big\}_{k=1}^g$ except for $\ualpha_j$ with which it has only one point in common, necessarily $\e_{2j}$. Set
\[
\widetilde\RS:=\RS\setminus\bigcup_{k=1}^g(\ualpha_k\cup\ubeta_k) \quad \mbox{and} \quad \widehat\RS:=\RS\setminus\bigcup_{k=1}^g\ualpha_k.
\]
The constructed collection $\big\{\ualpha_k,\ubeta_k\big\}_{k=1}^g$ forms a homology basis on $\RS$ and so defined $\widetilde\RS$ is simply connected. In the case $g=0$ these definitions are void and the whole surface is conformally equivalent to the Riemann sphere $\overline\C$.

\subsection{Differentials on $\RS$}

Denote by $\mathrm{d}\vec\Omega:=\left(\mathrm{d}\Omega_1,\ldots,\mathrm{d}\Omega_g\right)^\mathsf{T}$ the column vector of $g$ linearly independent holomorphic differentials normalized so that $\oint_{\ualpha_k}\mathrm{d}\vec\Omega = \vec e_k$, $k\in\{1,\ldots,g\}$, where $\left\{\vec e_k\right\}_{k=1}^g$ is the standard basis for $\R^g$ and $\vec e^\mathsf{T}$ is the transpose of $\vec e$. Since the genus of $\RS$ is $g$, the differentials $\mathrm{d}\Omega_k$ form a basis for the space of holomorphic differentials on $\RS$. Set
\begin{equation}
\label{B}
\mathbf{B} := \left[ \oint_{\ubeta_j}\mathrm{d}\Omega_k\right]_{j,k=1}^g.
\end{equation}
It is known that the matrix $\mathbf{B}$ is symmetric and has positive definite imaginary part. Set
\begin{equation}
\label{wR}
w\big(z^{(k)}\big) := (-1)^kw_\Delta(z), \quad z\in D,
\end{equation}
which is continuous across $\bd$ and therefore is rational on $\RS$. It can be argued that
\begin{equation}
\label{ExplicitOmegas}
\mathrm{d}\Omega_j(\z) = (L_j/w)(\z)\mathrm{d}z,
\end{equation}
for some $L_j$, which is a polynomial in $z$ lifted to $\RS$ of degree at most $g-1$.

Analogously to \eqref{wR}, the function 
\begin{equation}
\label{hR}
h\big(z^{(k)}\big) := (-1)^kh_\Delta(z), \quad z\in D,
\end{equation}
extends to $\bd$ by continuity and is rational on $\RS$. By setting $\mathrm{d}G(\z)=h(\z)\mathrm{d}z$, we obtain the so-called \emph{Green's differential} on $\RS$. That is, all the periods (integrals over cycles on $\RS$) of $\mathrm{d}G$ are purely imaginary and $\mathrm{d}G$ is meromorphic having two simple poles at $\infty^{(1)}$ and $\infty^{(0)}$ with respective residues 1 and $-1$ (it holds that $\mathrm{d}G(z^{(k)})=((-1)^{k+1}/\zeta+\textnormal{holomorphic})\mathrm{d}\zeta$ in local coordinates $\zeta=1/z^{(k)}$). Thus, we can define two vectors of real constants  $\vec\omega=(\omega_1,\ldots,\omega_g)^\mathsf{T}$ and $\vec\tau=(\tau_1,\ldots,\tau_g)^\mathsf{T}$ by
\begin{equation}
\label{constants}
\omega_k:=-\frac1{2\pi\mathrm{i}}\oint_{\ubeta_k}\mathrm{d}G \quad \mbox{and} \quad \tau_k:=\frac1{2\pi\mathrm{i}}\oint_{\ualpha_k}\mathrm{d}G.
\end{equation}

\subsection{Mapping Function}
Define
\begin{equation}
\label{phi}
\map(\z) := \exp\left\{\int_{\e_0}^\z \mathrm{d}G\right\}, \quad \z\in\widetilde\RS.
\end{equation}
The function $\map$ is holomorphic and non-vanishing on $\widetilde\RS$ except for a simple pole at $\infty^{(0)}$ and a simple zero at $\infty^{(1)}$. Furthermore, it possesses continuous traces on both sides of each cycle of the canonical basis that satisfy
\begin{equation}
\label{jumpa}
\map^+ = \map^-\left\{
\begin{array}{ll}
\displaystyle \exp\big\{2\pi \mathrm{i}\omega_k\big\} & \mbox{on} \quad \ualpha_k, \\
\displaystyle \exp\big\{2\pi \mathrm{i}\tau_k\big\} & \mbox{on} \quad \ubeta_k.
\end{array}
\right.
\end{equation}
In the case $g=0$, $\Phi$ is a rational function well-defined on the whole Riemann surface.

Observe that the path of integration in \eqref{hR} always can be chosen so it completely belongs to either $\RS^{(0)}$ or $\RS^{(1)}$. Thus, it readily follows from \eqref{hR} and \eqref{h} that
\begin{equation}
\label{IneqPhi}
\Phi(z^{(k)}) = \exp\left\{(-1)^k\int_{e_0}^zh_\Delta(t)\mathrm{d}t\right\} \quad \mbox{and} \quad \big|\Phi(z^{(k)})\big|=\exp\left\{(-1)^kg_D(z)\right\}
\end{equation}
for $z\in D$. This computation has a trivial but remarkably important consequence, namely,
\begin{equation}
\label{ineq1}
\Phi(z^{(0)})\Phi(z^{(1)})\equiv1 \quad \mbox{and} \quad |\Phi(z^{(0)})|>|\Phi(z^{(1)})|, \quad z\in D.
\end{equation}
When $g=0$, the pull back of $\Phi$ from $D^{(0)}$ to $D$ is nothing else but the conformal map of $D$ onto $\{|z|>1\}$ fixing the point at infinity and sending $e_0$ to 1.

\subsection{Cauchy Kernel}
Let $\ugamma$ be an involution-symmetric, piecewise-smooth oriented chain on $\RS$ that has only finitely many points in common with the $\ualpha$-cycles. Further, let $\lambda$ be a H\"older continuous function on $\ugamma$. That is, for each $\z\in\ugamma$, $\lambda\circ\phi_{\z}$ is H\"older continuous on $\phi_\z^{-1}(\ugamma)$ where $\phi_\z$ is a holomorphic local parametrization around $\z$. 

Denote by $\mathrm{d}\Omega_{\z,\z^*}$ the normalized abelian differential of the third kind (i.e., it is a meromorphic differential with two simple poles at $\z$ and $\z^*$ with respective residues 1 and $-1$ normalized to have zero periods on the $\ualpha$-cycles). Set
\[
\Lambda(\z) := \frac1{4\pi\mathrm{i}}\oint_{\ugamma}\lambda\mathrm{d}\Omega_{\z,\z^*}, \quad \z\not\in\ugamma.
\]
It is known \cite[Eq.~(2.7)--(2.9)]{Zver71} that $\Lambda$ is a holomorphic function in $\widehat\RS\setminus\ugamma$, $\Lambda(\z)+\Lambda(\z^*)\equiv0$ there, the traces $\Lambda^\pm$ are continuous and satisfy
\[
\Lambda^+(\z) - \Lambda^-(\z) = \frac12\left\{
\begin{array}{rl}
\displaystyle \lambda(\z)+\lambda(\z^*), & \z\in\ugamma, \\
\displaystyle -2\oint_{\ugamma} \lambda\mathrm{d}\Omega_k, & \z\in\ualpha_k\setminus\ugamma.
\end{array}
\right.
\]
That is, the differential $\mathrm{d}\Omega_{\z,\z^*}$ is a discontinuous Cauchy kernel on $\RS$ (it is discontinuous as $\Lambda$ has additional jumps across the $\ualpha$-cycles).

\subsection{Auxiliary Functions, I}
To remove the jumps of $\Phi$ across the $\ubeta$-cycles, define $\lambda_{\vec \tau}$ to be the function on $\ugamma=\cup\ubeta_k$ such that $\lambda_{\vec \tau}\equiv-2\pi\mathrm{i}\tau_k$ on $\ubeta_k$ and set
\begin{equation}
\label{stau}
S_{\vec \tau}(\z) := \exp\big\{\Lambda_{\vec\tau}(\z)\big\}, \quad \z\in\widetilde\RS.
\end{equation}
Then $S_{\vec \tau}$ is a holomorphic function in $\widetilde\RS$ with continuous traces that satisfy
\begin{equation}
\label{staujumps}
S_{\vec \tau}^+ = S_{\vec \tau}^-\left\{
\begin{array}{ll}
\displaystyle \exp\big\{2\pi \mathrm{i}\big(\mathbf B\vec\tau\:\big)_k\big\} & \mbox{on} \quad \ualpha_k, \smallskip \\
\displaystyle \exp\big\{-2\pi \mathrm{i}\tau_k\big\} & \mbox{on} \quad \ubeta_k,
\end{array}
\right.
\end{equation}
where the upper equality follows straight from \eqref{B} and we adopt the convention $(\vec c)_k=c_k$ for $\vec c=(c_1,\ldots,c_g)$.

Let now $\rho$ be a non-vanishing holomorphic function on $\Delta$. As $\rho$ is non-vanishing, one can select a smooth branch of $\log\rho$, which we lift to $\bd$, $\lambda_\rho:=-\log\rho\circ\pi$. Define
\begin{equation}
\label{Sp}
S_\rho(\z) := \exp\big\{\Lambda_\rho(\z)\big\}, \qquad \vec c_\rho := -\frac1{2\pi\mathrm{i}}\oint_{\bd}\lambda_\rho\mathrm{d}\vec\Omega.
\end{equation}
Then $S_\rho$ is a holomorphic and non-vanishing function in $\widehat\RS\setminus\bd$ with continuous traces that satisfy
\begin{equation}
\label{jumpSp}
S_\rho^+ = S_\rho^-\left\{
\begin{array}{ll}
\displaystyle \exp\big\{2\pi \mathrm{i}\big(\vec c_\rho\big)_k\big\} & \mbox{on} \quad \ualpha_k, \\
1/\rho\circ\pi & \mbox{on} \quad \bd.
\end{array}
\right.
\end{equation}

By gathering together \eqref{jumpa}, \eqref{staujumps}, \eqref{jumpSp} and setting $S_{n\vec\tau}:=S^n_{\vec\tau}$, we deduce that
\begin{equation}
\label{jump1}
(\Phi^n S_\rho S_{n\vec\tau})^+ = (\Phi^n S_\rho S_{n\vec\tau})^-\left\{
\begin{array}{ll}
\displaystyle \exp\big\{2\pi \mathrm{i}\big(\vec c_\rho+n\big(\vec\omega+\mathbf B\vec\tau\big)\big)_k\big\} & \mbox{on} \quad \ualpha_k, \\
\displaystyle 1/\rho\circ\pi & \mbox{on} \quad \bd.
\end{array}
\right.
\end{equation}

\subsection{Jacobi Inversion Problem}
To remove the jump of $\Phi^n S_\rho S_{n\vec\tau}$ from the $\ualpha$-cycles, let us digress into explaining what a Jacobi inversion problem is.

An \emph{integral divisor} is a formal symbol of the form $\mathcal{D}=\sum n_j\z_j$, where $\{\z_j\}$ is an arbitrary finite collection of distinct points on $\RS$ and $\{n_j\}$ is a collection of positive integers. The sum $\sum n_j$ is called the \emph{degree} of the divisor $\mathcal{D}$. Let $\mathcal{D}_1=\sum n_j\z_j$ and $\mathcal{D}_2=\sum m_j\w_j$ be integral divisors. A divisor $\mathcal{D}_1-\mathcal{D}_2$ is called \emph{principal} if there exists a rational function on $\RS$ that has a zero at every $\z_j$ of multiplicity $n_j$,  a pole at every $\w_j$ of order $m_j$, and otherwise is non-vanishing and finite. By Abel's theorem, $\mathcal{D}_1-\mathcal{D}_2$ is principal if and only if the divisors $\mathcal{D}_1$ and $\mathcal{D}_2$ have the same degree and
\[
\vec\Omega(\mathcal{D}_1) - \vec\Omega(\mathcal{D}_2) \equiv \vec 0 \quad \big(\mdp\mathrm{d}\vec\Omega\big),
\]
where $\vec\Omega(\mathcal{D}_1) := \sum n_j\int_{\e_0}^{\z_j}\mathrm{d}\vec\Omega$ and the equivalence of two vectors $\vec c,\vec e\in \C^g$ is defined by $\vec c\equiv \vec e$ $\big(\mdp\mathrm{d}\vec\Omega\big)$ if and only if $\vec c - \vec e = \vec j + \mathbf{B}\vec m$ for some $\vec j,\vec m\in\Z^g$.

Set $\mathcal{D}_*=g\infty^{(1)}$. We are seeking a solution of the following Jacobi inversion problem: find an integral divisor $\mathcal{D}$ of degree $g$ such that
\begin{equation}
\label{main-jip}
\vec\Omega(\mathcal{D})-\vec\Omega(\mathcal{D}_*) \, \equiv \, \vec c_\rho + n\big(\vec\omega+\mathbf{B}\vec\tau\:\big) \quad \left(\mdp \mathrm{d}\vec\Omega\right),
\end{equation}
where the vectors $\vec\omega$ and $\vec\tau$ were defined in \eqref{constants}. This problem is always solvable and the solution is unique up to a principal divisor. That is, if $\mathcal{D} - \big\{\mbox{ principal divisor }\big\}$ is an integral divisor, then it also solves \eqref{main-jip}. Immediately one can see that the subtracted principal divisor should have an integral part of degree at most $g$. As $\RS$ is hyperelliptic, such divisors come solely from rational functions on $\overline\C$ lifted to $\RS$. In particular, such principal divisors are involution-symmetric. Hence, if a solution of \eqref{main-jip} contains at least one pair of involution-symmetric points, then replacing this pair by another such pair produces a different solution of \eqref{main-jip}. However, if a solution does not contain such a pair, then it solves \eqref{main-jip} uniquely. 

\subsection{Solutions of the JIP}
In what follows, we denote by $\mathcal{D}_n$ either the unique solution of \eqref{main-jip} or the solution where each conjugate-symmetric pair is replaced by $\infty^{(0)}+\infty^{(1)}$. We further set $\N_*$ to be the subsequence of all indices for which \eqref{main-jip} is uniquely solvable. Non-unique solutions are related to unique solutions in the following manner:
\begin{equation}
\label{Dn}
\mathcal{D}_n=\sum_{i=1}^{g-l}\tr_i+k\infty^{(0)}+(l-k)\infty^{(1)} \quad \Leftrightarrow \quad \mathcal{D}_{n+j} = \mathcal{D}_n +j\big(\infty^{(0)}-\infty^{(1)}\big),
\end{equation}
for $j\in\{-k,\ldots,l-k\}$, where $l>0$, $k\in\{0,\ldots,l\}$, and $|t_i|<\infty$. Indeed, Riemann's relations state that
\[
\oint_{\ubeta_k}\mathrm{d}\Omega_{\infty^{(1)},\infty^{(0)}} = 2\pi\mathrm{i}\int_{\infty^{(0)}}^{\infty^{(1)}}\mathrm{d}\Omega_k
\]
for each $k\in\{1,\ldots,g\}$, where the path of integration lies entirely in $\widetilde\RS$. Since the differentials $\mathrm{d}\Omega_{\infty^{(1)},\infty^{(0)}}$ and $\mathrm{d}G$ have the same poles with the same residues, they differ by a holomorphic differential. Their normalizations imply that
\[
\mathrm{d}G = \mathrm{d}\Omega_{\infty^{(1)},\infty^{(0)}} + 2\pi\mathrm{i}\sum_{k=1}^g\tau_k\mathrm{d}\Omega_k.
\]
Combining the last two equations with \eqref{B} and \eqref{constants} we get that
\begin{equation}
\label{diffOmegas}
\vec\Omega\big(\infty^{(0)}\big) - \vec\Omega\big(\infty^{(1)}\big) = \vec\omega+\mathbf{B}\vec\tau,
\end{equation}
which immediately implies that
\[
\vec \Omega(\mathcal{D}_n) - \vec \Omega(\mathcal{D}_*)+ j\left(\vec\Omega\big(\infty^{(0)}\big) - \vec\Omega\big(\infty^{(1)}\big)\right) \equiv \vec c_\rho + (n+j)\big(\vec\omega+\mathbf{B}\vec\tau\:\big)
\]
from which \eqref{Dn} easily follows. In particular, \eqref{Dn} implies the unique solvability of \eqref{main-jip} for the indices $n-k$ and $n+l-k$. 

In another connection, if $\mathcal{D}_n$ is a unique solution of \eqref{main-jip} that does not contain $\infty^{(k)}$, $k\in\{0,1\}$, then $\mathcal{D}_{n-(-1)^k}$ is also a unique solution of \eqref{main-jip} as otherwise it would contain at least one pair $\infty^{(1)}+\infty^{(0)}$, which would imply that $\mathcal{D}_n$ contains $\infty^{(k)}$ by \eqref{Dn}. Moreover, the divisors $\mathcal{D}_n$ and $\mathcal{D}_{n-(-1)^k}$ have no points in common. Indeed, denote by $\mathcal{D}$ the common part. Then
\begin{equation}
\label{previous-next}
\vec \Omega(\mathcal{D}_n) - \vec \Omega\big(\mathcal{D}_{n-(-1)^k}\big) - (-1)^k\left(\vec\Omega\big(\infty^{(0)}\big) - \vec\Omega\big(\infty^{(1)}\big)\right) \equiv \vec 0 \quad (\mdp \mathrm{d}\vec\Omega)
\end{equation}
and therefore the divisor $\mathcal{D}_n-\mathcal{D}_{n-(-1)^k}-(-1)^k \infty^{(0)} +(-1)^k \infty^{(1)}$ is principal. However, if the degree of $\mathcal{D}$ were strictly positive, the integral part of the constructed divisor would be at most $g$. Such divisors come solely from rational functions on $\overline\C$ lifted to $\RS$ and are involution-symmetric. Hence, the divisor $\mathcal{D}_n-\mathcal{D}$ would contain an involution-symmetric pair or $\infty^{(k)}$. As both conclusions are impossible, the claim indeed takes place.

\subsection{Limit Points}
\label{ssec:limitpoints}

One can consider integral divisors of degree $g$ as elements of $\RS^g/\Sigma_g$, the quotient of $\RS^g$ by the symmetric group $\Sigma_g$, which is a compact topological space. Thus, it make sense to talk about the limit points of $\{\mathcal{D}_n\}$. The considerations of the previous section extend to them in the following manner.

Let $\N^\prime\subseteq\N$ be such that $\mathcal{D}_n\to \mathcal{D}^\prime$, $n\in\N^\prime$, for some divisor $\mathcal{D}^\prime$. In the most general form the divisor $\mathcal{D}^\prime$ can be written as
\[
\mathcal{D}^\prime = \mathcal{D} + \sum_{i=1}^k\left(z_i^{(0)}+z_i^{(1)}\right) + l_0\infty^{(0)} + l_1\infty^{(1)},
\]
where the integral divisor $\mathcal{D}$ has degree $g-2k-l_0-l_1$, is non-special, and does not contain neither $\infty^{(0)}$ nor $\infty^{(1)}$. Let further $\N^{\prime\prime}\subseteq\N^\prime$ be another subsequence such that the divisors $\mathcal{D}_{n+l_1+k}$, $n\in\N^{\prime\prime}$, converge to some divisor, say $\mathcal{D}^{\prime\prime}$. Then the continuity of $\vec\Omega$ implies that
\[
\lim_{\N^{\prime\prime}\ni n\to\infty} \vec\Omega\big(\mathcal{D}_n\big) = \vec\Omega\big(\mathcal{D}^{\prime}\big) \quad \text{and} \quad \lim_{\N^{\prime\prime}\ni n\to\infty} \vec\Omega\big(\mathcal{D}_{n+l_1+k}\big) = \vec\Omega\big(\mathcal{D}^{\prime\prime}\big)
\]
with all the paths of integration belonging to $\widetilde\RS$. That is,
\[
\left\{
\begin{array}{lll}
\displaystyle  \lim_{\N^{\prime\prime}\ni n\to\infty} \left(\vec c_\rho+n\big(\vec\omega+\mathbf{B}\vec\tau\big)\right) &\equiv& \displaystyle \vec\Omega\big(\mathcal{D}^{\prime}\big) - \vec\Omega\big(\mathcal{D}_*\big), \smallskip \\
\displaystyle \lim_{\N^{\prime\prime}\ni n\to\infty} \left(\vec c_\rho+(n+l_1+k)\big(\vec\omega+\mathbf{B}\vec\tau\big)\right) &\equiv& \displaystyle \vec\Omega\big(\mathcal{D}^{\prime\prime}\big) - \vec\Omega\big(\mathcal{D}_*\big),
\end{array}
\right.
\]
since $\mathcal{D}_n$ solves \eqref{main-jip}. Hence, it holds by \eqref{diffOmegas} that
\[
\vec\Omega\big(\mathcal{D}^{\prime\prime}\big) \equiv \vec\Omega\big(\mathcal{D}^{\prime}\big) + (l_1+k)\left( \vec\Omega\big(\infty^{(0)}\big) - \vec\Omega\big(\infty^{(1)}\big)  \right).
\]
Observe also that $\vec\Omega\big(z^{(0)}\big)=-\vec\Omega\big(z^{(1)}\big)$ as follows from \eqref{wR} and \eqref{ExplicitOmegas}. Thus, the above congruence can be rewritten as
\[
\vec\Omega\big(\mathcal{D}^{\prime\prime}\big) \equiv \vec\Omega\big(\mathcal{D}\big) + (l_0+l_1+2k)\vec\Omega\left(\infty^{(0)}\right).
\]
Therefore, it follows from Abel's theorem that the divisor $\mathcal{D} + (l_0+l_1+2k)\infty^{(0)} - \mathcal{D}^{\prime\prime}$ is principal. However, it is also special and does not contain any involution-symmetric pairs, which is possible only if it is identically zero. That is,
\[
\mathcal{D}^{\prime\prime} = \mathcal{D} + (l_0+l_1+2k)\infty^{(0)}.
\]
In fact, exactly as in the preceding subsection, we could take the second sequence to be $\mathcal{D}_{n+j}$ for any $j\in\{-l_0-k,\ldots,l_1+k\}$ and arrive at similar conclusions, see \cite[Proposition~2]{uAY}.

Moreover, let now $\N^{\prime\prime\prime}\subseteq \N^{\prime\prime}$ be such that $\mathcal{D}_{n+l_1+k+1}\to\mathcal{D}^{\prime\prime\prime}$ for some divisor $\mathcal{D}^{\prime\prime\prime}$. It follows from the considerations as above and the argument used in \eqref{previous-next} applied to $\mathcal{D}^{\prime\prime\prime}$ and $\mathcal{D}^{\prime\prime}$ that $\mathcal{D}^{\prime\prime\prime}$ is non-special and disjoined from $\mathcal{D}^{\prime\prime}$.
\subsection{Riemann's Theta Function}

The solution of the Jacobi inversion problem \eqref{main-jip} helps to remove the jump from the $\ualpha$-cycles in \eqref{jump1} via \emph{Riemann's theta function}. The theta function associated with $\mathbf B$ is an entire transcendental function of $g$ complex variables defined by
\[
\theta\left(\vec u\right) := \sum_{\vec n\in\Z^g}\exp\bigg\{\pi\mathrm{i}\vec n^\mathsf{T}\mathbf B\vec n + 2\pi\mathrm{i}\vec n^\mathsf{T}\vec u\bigg\}, \quad \vec u\in\C^g.
\]
As shown by Riemann, the symmetry of $\mathbf B$ and positive definiteness of its imaginary part ensures the convergence of the series for any $\vec u$. It can be directly checked that $\theta$ enjoys the following periodicity properties:
\begin{equation}
\label{theta-periods}
\theta\left(\vec u + \vec j + \mathbf B\vec m\right) = \exp\bigg\{-\pi \mathrm{i}\vec m^\mathsf{T}\mathbf B \vec m - 2\pi \mathrm{i}\vec m^\mathsf{T}\vec u\bigg\}\theta\big(\vec u\big), \quad \vec j,\vec m\in\Z^g.
\end{equation}

Specializing integral divisors to one point $\z$, we reduce $\vec\Omega(\z)$ to a vector of holomorphic functions in $\widetilde\RS$ with continuous traces on the cycles of the homology basis that satisfy
\begin{equation}
\label{Omega-jump}
\vec\Omega^{+}-\vec\Omega^- = \left\{
\begin{array}{rl}
-\mathbf B\vec e_k & \mbox{on} \quad \ualpha_k,\\
\vec e_k & \mbox{on} \quad \ubeta_k,
\end{array}
\right.
\end{equation}
$k\in\{1,\ldots,g\}$. It readily follows from the relations above that each $\Omega_k$ is, in fact, holomorphic in $\widehat\RS\setminus\ubeta_k$. It is known that
\[
\theta\left(\vec u\right)=0 \quad \Leftrightarrow \quad \vec u\equiv \vec\Omega\left(\mathcal{D}_{\vec u}\right) + \vec K \quad \left(\mdp d\vec\Omega\right)
\]
for some integral divisor $\mathcal{D}_{\vec u}$ of degree $g-1$, where $\vec K$ is the vector of Riemann constants defined by $(\vec K)_j:=([\mathbf B]_{jj}-1)/2-\sum_{k\neq j}\oint_{\ualpha_k}\Omega_j^-\mathrm{d}\Omega_k$, $j\in\{1,\ldots,g\}$.

For $n\in\N_*$ ($\mathcal{D}_n$ is unique, and hence does not contain involution-symmetric pairs), set
\begin{equation}
\label{thetan}
\Theta_n(\z) := \frac{\theta\left(\vec\Omega(\z) - \vec\Omega(\mathcal{D}_n) - \vec K\right)}{\theta\left(\vec\Omega(\z) - \vec\Omega(\mathcal{D}_*) - \vec K\right)}.
\end{equation}
Since the divisors $\mathcal{D}_n$ and $\mathcal{D}_*$ do not contain involution-symmetric pairs, $\vec\Omega(\z)+\vec\Omega(\z^*)\equiv0$, and $\theta(-\vec u)=\theta(\vec u)$,  $\Theta_n$ is a multiplicatively multi-valued meromorphic function on $\RS$ with zeros at the points of the divisor $\mathcal{D}_n$ of respective multiplicities, a pole of order $g$ at $\infty^{(1)}$, and otherwise non-vanishing and finite (there will be a reduction of the order of the pole at $\infty^{(1)}$ when the divisor $\mathcal{D}_n$ contains this point). In fact, it is meromorphic and single-valued in $\widehat\RS$ and
\begin{eqnarray}
\Theta_n^+  &=& \Theta_n^- \exp\left\{2\pi \mathrm{i}\big(\Omega_k(\mathcal{D}_*)-\Omega_k(\mathcal{D}_n)\big)\right\} \nonumber \\
\label{jump2}
&=& \Theta_n^- \exp\left\{-2\pi \mathrm{i}\left(\vec c_\rho + n\big(\vec\omega+\mathbf{B}\vec\tau\:\big) +\mathbf B\vec m_n\right)_k\right\}
\end{eqnarray}
on $\ualpha_k$ by \eqref{theta-periods} and \eqref{Omega-jump}, where $\vec m_n,\vec j_n\in\Z^g$ are such that
\begin{equation}
\label{jnmn}
\vec\Omega(\mathcal{D}_n)-\vec\Omega(\mathcal{D}_*) = \vec c_\rho + n\big(\vec\omega+\mathbf{B}\vec\tau\:\big) + \vec j_n +\mathbf B\vec m_n.
\end{equation}

\subsection{Auxiliary Functions, II}
Let $\lambda_{\vec m_n}$ be the function on $\ugamma=\cup\ubeta_k$ such that $\lambda_{\vec m_n}\equiv-2\pi\mathrm{i}(\vec m_n)_k$ on $\ubeta_k$ and set
\begin{equation}
\label{smn}
S_{\vec m_n}(\z) := \exp\big\{\Lambda_{\vec m_n}(\z)\big\}, \quad \z\in\widetilde\RS.
\end{equation}
Since $\vec m_n\in \Z$, $S_{\vec m_n}$ is holomorphic across the $\ubeta$-cycles by the analytic continuation principle and therefore is holomorphic in $\widehat\RS$. It has continuous traces on the $\ualpha$-cycles that satisfy
\begin{equation}
\label{jump3}
S_{\vec m_n}^+ = S_{\vec m_n}^- \exp\big\{2\pi \mathrm{i}\big(\mathbf B\vec m_n\big )_k\big\} \quad \mbox{on} \quad \ualpha_k.
\end{equation}

As $\mathbf{B}$ has positive definite imaginary part, any vector in $\vec u\in\C^g$ can be uniquely written as $\vec x+\mathbf{B}\vec y$ for some $\vec x,\vec y\in\R^g$. Write
\[
\vec c_\rho =: \vec x_\rho + \mathbf{B}\vec y_\rho \quad \text{and} \quad \vec\Omega(\mathcal{D}_n)-\vec\Omega(\mathcal{D}_*)  =: \vec x_n+\mathbf{B}\vec y_n, \quad n\in\N,
\]
Then, of course,
\begin{equation}
\label{xnyn}
\vec x_n = \vec x_\rho + n\vec \omega + \vec j_n \quad \text{and} \quad \vec y_n = \vec y_\rho + n\vec\tau + \vec m_n
\end{equation}
by \eqref{jnmn}. Since the image of the closure of $\widetilde\RS$ under $\vec\Omega$ is bounded in $\C^g$, so are the vectors $\vec x_n,\vec y_n$. Clearly, in this case \eqref{xnyn} implies that the vectors $n\vec\omega+\vec j_n$ and $n\vec\tau+\vec m_n$ are bounded with $n$. Therefore, 
\begin{equation}
\label{SzegoBounded}
C^{-1} \leq \left|S_\rho S_{n\vec\tau+\vec m_n}\right| \leq C
\end{equation}
uniformly with $n$ in $\widetilde\RS$ for some absolute constant $C>1$.

\subsection{A Family of BVPs}
By combining all the material above, we obtain the following theorem.

\begin{theorem}
\label{thm:Psi_n}
For $n\in\N$, let $\tilde n$ be the greatest integer in $\N_*$ smaller or equal to $n$ ($\tilde n=n-k$ using the notation from \eqref{Dn}). With \eqref{phi}, \eqref{stau}, \eqref{Sp},  \eqref{thetan}, and \eqref{smn} at hand, we deduce that the function
\begin{equation}
\label{Psi}
\Psi_n:=\Phi^{\tilde n}S_\rho S_{\tilde n\vec\tau+\vec m_{\tilde n}}\Theta_{\tilde n}
\end{equation}
is sectionally meromorphic in $\RS\setminus\bd$ whose zeros and poles there\footnote{$\Psi_n$ is non-vanishing and finite in $D^{(0)}\cup D^{(1)}$ except at the elements of its divisor that stand for zeros (resp. poles) if preceded by the plus (resp. minus) sign and the integer coefficients in front of them indicate multiplicity.} are described by the divisor
\[
(n-g)\infty^{(1)}+\mathcal{D}_n-n\infty^{(0)}
\]
since $\mathcal{D}_n=\mathcal{D}_{\tilde n}+k\big(\infty^{(0)}-\infty^{(1)}\big)$. Moreover, it has continuous traces on $\bd\setminus \boldsymbol E_\Delta$ that satisfy
\[
\Psi_n^+ = \big(1/\rho\circ\pi\big)\Psi_n^-
\]
by \eqref{jump1}, \eqref{jump2}, and \eqref{jump3}, and it is bounded near the points in $\boldsymbol E_\Delta$.
\end{theorem}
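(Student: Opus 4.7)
The plan is to verify the theorem by directly combining the four factors defining $\Psi_n$, all of whose individual behavior has been catalogued in the preceding subsections. No genuinely new analytic input is required; the proof amounts to three routine checks: (i) summing the divisors, (ii) collecting the jumps across the three types of arcs ($\ualpha$-cycles, $\ubeta$-cycles, and $\bd$), and (iii) a local boundedness check at the ramification points $\boldsymbol E_\Delta$.

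First I would record the divisors. The factor $\Phi^{\tilde n}$ contributes a pole of order $\tilde n$ at $\infty^{(0)}$ and a zero of order $\tilde n$ at $\infty^{(1)}$ by \eqref{phi}. The factor $S_\rho$ is holomorphic and non-vanishing in $\widehat\RS\setminus\bd$ by \eqref{Sp}, and $S_{\tilde n\vec\tau+\vec m_{\tilde n}}$ is holomorphic and non-vanishing in $\widehat\RS$ by \eqref{smn} together with the fact that $\vec m_{\tilde n}\in\Z^g$ trivializes the $\ubeta$-jumps (which is precisely why this particular linear combination was introduced). Finally $\Theta_{\tilde n}$ has zeros at the points of $\mathcal{D}_{\tilde n}$ (counted with multiplicities) and a single pole of order $g$ at $\infty^{(1)}$. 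The sum of these divisors is $\mathcal{D}_{\tilde n}+(\tilde n-g)\infty^{(1)}-\tilde n\infty^{(0)}$, and substituting $\tilde n=n-k$ and $\mathcal{D}_{\tilde n}=\mathcal{D}_n-k\big(\infty^{(0)}-\infty^{(1)}\big)$ from \eqref{Dn} yields the advertised divisor $\mathcal{D}_n+(n-g)\infty^{(1)}-n\infty^{(0)}$.

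Next I would show that $\Psi_n$ is actually single-valued across the homology basis and identify the jump on $\bd$. Across $\ubeta_k$, the $\exp(2\pi\mathrm{i}\tilde n\tau_k)$ jump of $\Phi^{\tilde n}$ from \eqref{jumpa} is cancelled by the $\exp(-2\pi\mathrm{i}\tilde n\tau_k)$ jump of $S_{\tilde n\vec\tau+\vec m_{\tilde n}}$ from \eqref{staujumps} (the $\vec m_{\tilde n}$ contribution being $\exp(-2\pi\mathrm{i}(\vec m_{\tilde n})_k)=1$), while $S_\rho$ and $\Theta_{\tilde n}$ are continuous across $\ubeta_k$. Across $\ualpha_k$, combining \eqref{jump1} with $n$ replaced by $\tilde n$, the extra factor $\exp\{2\pi\mathrm{i}(\mathbf B\vec m_{\tilde n})_k\}$ from \eqref{jump3}, and the jump \eqref{jump2} of $\Theta_{\tilde n}$ produces an exponential whose total argument lies in $2\pi\mathrm{i}\Z^g$ precisely by the JIP relation \eqref{jnmn} (the $\vec j_{\tilde n}\in\Z^g$ term being trivial); so the $\ualpha$-jumps cancel too. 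On $\bd\setminus\boldsymbol E_\Delta$, the factors $\Phi^{\tilde n}$, $S_{\tilde n\vec\tau+\vec m_{\tilde n}}$, and $\Theta_{\tilde n}$ are continuous (their jumps are supported on the homology basis, and $\Theta_{\tilde n}$ is single-valued in $\widehat\RS$), so only $S_\rho$ contributes, giving $\Psi_n^+=(1/\rho\circ\pi)\Psi_n^-$ by \eqref{jumpSp}.

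Finally, for boundedness near $\boldsymbol E_\Delta$: $\Phi^{\tilde n}$ is bounded because $|\Phi|\equiv 1$ on $\bd$ by \eqref{IneqPhi}; the Cauchy-type integrals $S_\rho$ and $S_{\tilde n\vec\tau+\vec m_{\tilde n}}$ are bounded near the ramification points since their integration chains are unions of \emph{closed} cycles carrying H\"older-continuous densities, so no endpoint-type singularities arise at points of $\boldsymbol E_\Delta$; and $\Theta_{\tilde n}$ is meromorphic on all of $\RS$ with its only pole at $\infty^{(1)}\notin\boldsymbol E_\Delta$. The only step requiring any care is the $\ualpha$-cancellation, which is really just the reason the corrective factors $\Theta_{\tilde n}$ and $S_{\vec m_{\tilde n}}$ were introduced in the first place; there is no substantive analytic obstacle, only the combinatorial bookkeeping to reconcile the $n\leftrightarrow\tilde n$ shift via \eqref{Dn}.
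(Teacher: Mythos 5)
Your argument is correct and is essentially the paper's own (the paper offers no separate proof beyond "combining all the material above"): one sums the divisors of $\Phi^{\tilde n}$, $S_\rho$, $S_{\vec\tau}^{\tilde n}S_{\vec m_{\tilde n}}$, $\Theta_{\tilde n}$, cancels the $\ualpha$- and $\ubeta$-jumps via \eqref{jump1}, \eqref{jump2}, \eqref{jump3} and \eqref{jnmn}, reads the jump on $\bd$ off \eqref{jumpSp}, and passes from $\tilde n$ to $n$ with \eqref{Dn}, exactly as you do. The one blemish is your claim that $S_{\tilde n\vec\tau+\vec m_{\tilde n}}$ is holomorphic in $\widehat\RS$: only the factor $S_{\vec m_{\tilde n}}$ continues across the $\ubeta$-cycles, while $S_{\vec\tau}^{\tilde n}$ retains the jump $\exp\{-2\pi\mathrm{i}\tilde n\tau_k\}$ on $\ubeta_k$ --- which you yourself use correctly two paragraphs later to cancel the corresponding jump of $\Phi^{\tilde n}$, so this is an inconsistency of wording rather than a gap.
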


To describe the asymptotic properties of $\Psi_n$ we need to further restrict $\N_*$.
\begin{definition}
\label{Nvarepsilon}
Given $\varepsilon>0$, we say that an index $n$ belongs to $\N_\varepsilon$ if and only if
\[
\pi\left(\RS^{(1)}\cap\mathcal{D}_{n-1}\right),\pi\left(\RS^{(0)}\cap\mathcal{D}_n\right)\subset \left\{z:|z|\leq\varepsilon^{-1}\right\},
\]
where we consider a divisor as a subset of points on $\RS$.
\end{definition}

The indices excluded from $\N_*$ are exactly the ones corresponding to the non-unique solutions of \eqref{main-jip}, that is, the solutions for which $\mathcal{D}_n$ contains at least one pair of $\infty^{(0)}+\infty^{(1)}$. Hence, if $n\in\N_\varepsilon$, then $n,n-1\in \N_*$. Furthermore, the subsequences $\N_\varepsilon$ are infinite for all $\varepsilon$ small enough as follows from the considerations in Section~\ref{ssec:limitpoints} (this is the precise reason why this subsection is included).

It follows immediately from the definition of $\N_\varepsilon$ that the following constants are well defined:
\begin{equation}
\label{gammans}
1/\gamma_n := \lim_{\z\to\infty^{(0)}}\Psi_n\big(\z\big)z^{-n}  \quad \mbox{and} \quad 1/\gamma_n^* := \lim_{\z\to\infty^{(1)}}\Psi_{n-1}(\z)z^{n-1-g}, \quad n\in\N_\varepsilon.
\end{equation}

\begin{lemma}
\label{lem:estimates}
For each bounded $K\subset D^{(1)}$, there exists constant  $C(K)>1$ such that
\begin{equation}
\label{maxPsi}
\max_K \left|\Psi_n\right| \leq C(K)^{-n}.
\end{equation}
Moreover, for a given $\varepsilon>0$ there exists a constant $C(\varepsilon)>1$ such that
\begin{equation}
\label{productgammas}
C(\varepsilon)^{-1} \leq \big|\gamma_n\gamma_n^*\big| \leq C(\varepsilon), \quad n\in\N_\varepsilon.
\end{equation}
\end{lemma}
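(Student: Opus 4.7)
The plan is to unpack the factorization $\Psi_n=\Phi^{\tilde n}S_\rho S_{\tilde n\vec\tau+\vec m_{\tilde n}}\Theta_{\tilde n}$ from Theorem~\ref{thm:Psi_n} and bound each factor individually.

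For \eqref{maxPsi}, I would fix a bounded $K\subset D^{(1)}$ and estimate each factor. By \eqref{IneqPhi}, $|\Phi|=e^{-g_D\circ\pi}$ on $D^{(1)}$, and since $g_D$ is continuous and strictly positive on the compact set $\pi(K)\subset D$, there exists $c_K>0$ with $|\Phi|\leq e^{-c_K}$ on $K$, hence $|\Phi|^{\tilde n}\leq e^{-c_K\tilde n}$. The Szeg\H{o}-type factor $|S_\rho S_{\tilde n\vec\tau+\vec m_{\tilde n}}|$ is uniformly bounded by \eqref{SzegoBounded}. Writing $\Theta_{\tilde n}$ as the ratio in \eqref{thetan}, the denominator $\theta(\vec\Omega(\z)-\vec\Omega(\mathcal{D}_*)-\vec K)$ is $n$-independent and vanishes on $\RS$ only at $\infty^{(1)}\not\in K$, so it is bounded below on $K$; the numerator is jointly continuous in $\z\in\RS$ and $\mathcal{D}_{\tilde n}\in\RS^g/\Sigma_g$ (a compact space), hence uniformly bounded on $K$. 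Since $\tilde n\geq n-g$ by \eqref{Dn}, combining the three estimates yields $\max_K|\Psi_n|\leq C(K)^{-n}$ for some $C(K)>1$.

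For \eqref{productgammas}, I would fix $n\in\N_\varepsilon$, so that $\tilde n=n$, $\widetilde{n-1}=n-1$, and both $\mathcal{D}_n,\mathcal{D}_{n-1}$ are non-special with $\infty^{(0)}\not\in\mathcal{D}_n$ and $\infty^{(1)}\not\in\mathcal{D}_{n-1}$. Expanding $\Psi_n(z^{(0)})z^{-n}$ as $z\to\infty$, using that $\Phi$ has a simple pole at $\infty^{(0)}$ with some fixed nonzero leading coefficient $\Phi_0$, gives $1/\gamma_n=\Phi_0^n\,S_\rho(\infty^{(0)})\,S_{n\vec\tau+\vec m_n}(\infty^{(0)})\,\Theta_n(\infty^{(0)})$. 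Analogously, using $\Phi(\z)\Phi(\z^*)\equiv 1$ from \eqref{ineq1}, the involution-antisymmetry $\Lambda(\z)+\Lambda(\z^*)\equiv 0$ of the Cauchy kernel (so that $S(\infty^{(1)})=S(\infty^{(0)})^{-1}$ for each $S$-factor), and the exact order-$g$ pole of $\Theta_{n-1}$ at $\infty^{(1)}$, the expansion of $\Psi_{n-1}(z^{(1)})z^{n-1-g}$ produces $1/\gamma_n^*=\Phi_0^{-(n-1)}\,\Theta_{n-1}^{(1)}/\bigl(S_\rho(\infty^{(0)})\,S_{(n-1)\vec\tau+\vec m_{n-1}}(\infty^{(0)})\bigr)$, where $\Theta_{n-1}^{(1)}:=\lim_{z\to\infty}\Theta_{n-1}(z^{(1)})z^{-g}$. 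Multiplying and cancelling the $\Phi_0$ and $S_\rho$ contributions leaves
\[
\frac{1}{\gamma_n\gamma_n^*}=\Phi_0\cdot S_{\vec\tau+\vec m_n-\vec m_{n-1}}(\infty^{(0)})\cdot\Theta_n(\infty^{(0)})\cdot\Theta_{n-1}^{(1)}.
\]
The prefactor is absolute, and the Szeg\H{o} factor is bounded above and below since $|\vec m_n-\vec m_{n-1}|$ is uniformly bounded by the argument preceding \eqref{SzegoBounded}.

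The hard part will be a uniform two-sided bound on $|\Theta_n(\infty^{(0)})\,\Theta_{n-1}^{(1)}|$ for $n\in\N_\varepsilon$. Via \eqref{thetan} and the local $\theta$-expansion at $\infty^{(1)}$, this product equals a fixed nonzero $\RS$-dependent constant times $\theta(\vec\Omega(\infty^{(0)})-\vec\Omega(\mathcal{D}_n)-\vec K)\cdot\theta(\vec\Omega(\infty^{(1)})-\vec\Omega(\mathcal{D}_{n-1})-\vec K)$. By the theta vanishing theorem together with non-specialness of $\mathcal{D}_n,\mathcal{D}_{n-1}$, each factor vanishes only when $\infty^{(0)}\in\mathcal{D}_n$ or $\infty^{(1)}\in\mathcal{D}_{n-1}$, both precluded by the $\N_\varepsilon$-condition. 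To upgrade to a uniform bound, I would appeal to Section~\ref{ssec:limitpoints}: any subsequential limit $\mathcal{D}^\prime$ of $\{\mathcal{D}_n\}_{n\in\N_\varepsilon}$ retains the property that its $\RS^{(0)}$-support is bounded away from $\infty^{(0)}$, and any involution-symmetric pairs $z_i^{(0)}+z_i^{(1)}$ that may appear in $\mathcal{D}^\prime$ contribute zero to $\vec\Omega(\mathcal{D}^\prime)$, so the theta arguments in the limit stay outside the zero locus of $\theta$. Continuity of $\theta$ and $\vec\Omega$, combined with compactness of the closure of $\{\mathcal{D}_n\}_{n\in\N_\varepsilon}$ in $\RS^g/\Sigma_g$, then delivers the required two-sided bound, completing the proof.
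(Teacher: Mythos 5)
Your proof of \eqref{maxPsi} is essentially the paper's argument (geometric smallness of the $\Phi$-factor, \eqref{SzegoBounded} for the Szeg\H{o}-type factors, boundedness of the theta ratio via boundedness of $\vec\Omega$ and compactness of $\RS^g/\Sigma_g$), and your reduction of \eqref{productgammas} to a two-sided bound on $\big|\Theta_n\big(\infty^{(0)}\big)\,\Theta_{n-1}^{(1)}\big|$ for $n\in\N_\varepsilon$ is exactly the reduction made in the paper. So the first half and the set-up of the second half are fine.

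The gap is in your final step, where you claim that involution-symmetric pairs $z_i^{(0)}+z_i^{(1)}$ appearing in a limit divisor $\mathcal{D}^\prime$ are harmless because they ``contribute zero to $\vec\Omega(\mathcal{D}^\prime)$, so the theta arguments in the limit stay outside the zero locus of $\theta$.'' This is false, and it is precisely the point where the real work lies. On the hyperelliptic surface $\RS$, a degree-$g$ integral divisor containing a symmetric pair is \emph{special}, and by Riemann's vanishing theorem the function $\z\mapsto\theta\big(\vec\Omega(\z)-\vec\Omega(\mathcal{D}^\prime)-\vec K\big)$ then vanishes \emph{identically} in $\z$ (indeed, for any $\z_0$ one has $\mathcal{D}^\prime\sim\z_0+\mathcal{E}$ with $\mathcal{E}\geq0$ of degree $g-1$, so the theta value at $\z_0$ equals $\theta\big(\vec\Omega(\mathcal{E})+\vec K\big)=0$ up to a nonzero period factor); dropping the pair from $\vec\Omega(\mathcal{D}^\prime)$ does not move the argument off the theta divisor. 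Hence, if a subsequential limit of $\{\mathcal{D}_n\}_{n\in\N_\varepsilon}$ (or of $\{\mathcal{D}_{n-1}\}_{n\in\N_\varepsilon}$) contained a symmetric pair, the corresponding limiting numerator in \eqref{thetan} would be zero and your compactness argument would yield no lower bound. What is needed — and what the paper proves — is that such pairs cannot occur in these limits: if a limit of $\mathcal{D}_n$, $n\in\N_\varepsilon$, contained $k\geq1$ pairs, then by the index-shift analysis of Section~\ref{ssec:limitpoints} (the analogue of \eqref{Dn} for limit points) the limits of $\mathcal{D}_{n-1}$ along a further subsequence would contain $\infty^{(1)}$ with multiplicity at least $2$, contradicting the requirement $\pi\big(\RS^{(1)}\cap\mathcal{D}_{n-1}\big)\subset\{|z|\leq\varepsilon^{-1}\}$ in Definition~\ref{Nvarepsilon}; the set of limits of $\{\mathcal{D}_{n-1}\}$ is handled symmetrically. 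Only after this exclusion are all limit divisors non-special with $\infty^{(0)}$, resp.\ $\infty^{(1)}$, outside them, and then your continuity-plus-compactness conclusion goes through. As written, your argument skips this step and replaces it with an incorrect claim, so the lower bound in \eqref{productgammas} is not established.
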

\begin{proof}
To show \eqref{maxPsi}, write
\[
|\Psi_n|=\big|\Phi^{\tilde n- g}\big|\cdot\big|S_\rho S_{n\vec\tau+\vec{m_n}}\big|\cdot\big|\Phi^{g}\Theta_n\big|.
\]
The first multiple in the decomposition above is locally uniformly geometrically small in $D^{(1)}$ by \eqref{ineq1} and the second one is uniformly bounded by \eqref{SzegoBounded}. Thus, it is enough to show that the functions $\big|\Phi^g\Theta_n\big|$ are uniformly bounded in $D^{(1)}$. It is, in fact, a family of continuous function in $\overline{D^{(1)}}\setminus\bigcup\ualpha_k$ with uniformly bounded jumps on the $\ualpha$-cycles (boundedness of the jumps follows from \eqref{jump2} and the uniform boundedness of the vectors $n\vec\tau+\vec m_n$ concluded after \eqref{xnyn}). Hence, each function is bounded in $\overline{D^{(1)}}$. As the family is indexed by the divisors $\mathcal{D}_n$ that belong to $\RS^g/\Sigma_g$ and the latter space is compact, the uniform boundedness follows.

It follows again from \eqref{jump2} and \eqref{SzegoBounded} that to show \eqref{productgammas} it is sufficient to establish the uniform boundedness with $n\in\N_\varepsilon$ the absolute values of
\begin{equation}
\label{product-thetas}
\Theta_n\big(\infty^{(0)}\big)\lim_{\z\to\infty^{(1)}}\Theta_{n-1}(\z)z^{-g}.
\end{equation}
To this end, denote by $\mathfrak{C}_\varepsilon^0$ and $\mathfrak{C}_\varepsilon^1$ the closures of $\big\{\mathcal{D}_n\big\}_{n\in\N_\varepsilon}$ and $\big\{\mathcal{D}_{n-1}\big\}_{n\in\N_\varepsilon}$ in the $\RS^g/\Sigma_g$-topology. Neither of these sets contains special divisors. Indeed, both sequences consist of non-special divisors and therefore we need to consider only the limiting ones. The limit points belonging to $\mathfrak{C}_\varepsilon^0$ are necessarily of the form
\[
\mathcal{D}+\sum_{i=1}^k\left(z_i^{(0)}+z_i^{(1)}\right) + l\infty^{(1)},
\]
where the integral divisor $\mathcal{D}$ has degree $g-2k-l$, is non-special, and does not contain neither $\infty^{(0)}$ nor $\infty^{(1)}$. If $k$ were strictly positive, the considerations of Section~\ref{ssec:limitpoints} would imply that $\mathfrak{C}_\varepsilon^1$ should contain divisors of the form
\[
\mathcal{D}+\sum_{i=1}^{k^\prime}\left(w_i^{(0)}+w_i^{(1)}\right) + (k-k^\prime-1)\infty^{(0)} + (l+1+k-k^\prime)\infty^{(1)}
\]
$0\leq k^\prime\leq k-1$. In particular, it would be true that $l+1+k-k^\prime\geq2$, which is impossible by the very definition of $\N_\varepsilon$. Since the set $\mathfrak{C}_\varepsilon^1$ can be examined similarly, the claim follows. Thus, using \eqref{thetan}, we can establish a quantity similar to \eqref{product-thetas}, for the pairs of limit points in $\mathfrak{C}_\varepsilon^0\times\mathfrak{C}_\varepsilon^1$. Moreover, all these quantities are finite and non-zero as all the divisors are non-special. The claim now follows from the compactness argument.
\end{proof}

\section{Main Results}
\label{sec:RH}

Fix an algebraic S-contour $\Delta=E_0\cup E_1\cup\bigcup\Delta_j$, see Definition~\ref{SContour}, and let $w_\Delta$ be defined by \eqref{w_Delta}, $z^{-g-1}w_\Delta(z)\to1$ as $z\to\infty$. Let $\rho$ be a function holomorphic and non-vanishing in a neighborhood of each connected component of $\Delta$ (in general, $\rho$ is piecewise holomorphic). Recall \eqref{f_rho} that we set
\[
f_\rho(z) = \frac1{2\pi\mathrm{i}}\int_\Delta\frac{(\rho/w_\Delta^+)(t)}{t-z}\mathrm{d}t, \quad z\in \overline\C\setminus\Delta.
\]
Further, let $\Psi_n$, which depends on $\rho$, be defined by \eqref{Psi}. With a slight abuse of notation, put
\begin{equation}
\label{Psistar}
\Psi_n(z):=\Psi_n\big(z^{(0)}\big) \quad \text{and} \quad \Psi_n^*(z):=\Psi_n\big(z^{(1)}\big), \quad z\in D.
\end{equation}
Then it follows from Theorem~\ref{thm:Psi_n} that these functions are holomorphic in $\C\setminus\Delta$. Moreover, when $n\in\N_\varepsilon$, see Definition~\ref{Nvarepsilon}, it holds that $\Psi_n$ has a pole of exact order $n$ and at infinity, $\Psi_{n-1}$ has a pole of order at most $n-1$ there, $\Psi_n^*$ vanishes at infinity, and $\Psi_{n-1}^*$ has a zero of exact order $n-1-g$ there. Furthermore, it holds that
\begin{equation}
\label{Psistarjumps}
\big(\Psi_n^*\big)^\pm = \rho \Psi_n^\mp \quad \text{on} \quad \bigcup\Delta_j,
\end{equation}
where all the traces are continuous on $\bigcup\Delta_j$ and are bounded near $e\in E_0\cup E_1$. Finally, let $\gamma_n$ and $\gamma_n^*$ be defined by \eqref{gammans}. Then the following theorem holds.

\begin{theorem}
\label{thm:main}
Let $[n/n]_{f_\rho}=P_n/Q_n$ be the $n$-th diagonal Pad\'e approximant to $f_\rho$ defined by \eqref{f_rho} with $\rho$ holomorphic and non-vanishing on $\Delta$ and $R_n$ be the linearized error of approximation given by \eqref{linsys}. Then for all $n\in\N_\varepsilon$ large enough it holds that
\begin{equation}
\label{SA1}
\left\{
\begin{array}{rll}
Q_n &=& \displaystyle \left(1+\upsilon_{n1}\right) \gamma_n\Psi_n + \upsilon_{n2}\gamma_n^* \Psi_{n-1},\bigskip \\
w_\Delta R_n &=& \displaystyle \left(1+\upsilon_{n1}\right)\gamma_n \Psi_n^*+\upsilon_{n2}\gamma_n^*\Psi_{n-1}^*,
\end{array}
\right.
\end{equation}
locally uniformly in $\overline\C\setminus\Delta$, where $\upsilon_{nj}(\infty)=0$ and $|\upsilon_{nj}|\leq C_\varepsilon^{-n}$ in $\overline\C$ for some constant $C_\varepsilon>1$.
\end{theorem}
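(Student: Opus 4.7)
The plan is a matrix Riemann-Hilbert analysis in the style of Deift-Zhou, starting from the Fokas-Its-Kitaevskii problem \rhy: find a $2\times 2$ matrix $Y$, holomorphic in $\C\setminus\Delta$, whose top row is $(Q_n,R_n)$ and whose bottom row is the analogous pair at index $n-1$ suitably rescaled, satisfying $Y(z)\,\mathrm{diag}(z^{-n},z^n)\to I$ at infinity and the jump $Y_+=Y_-\bigl(\begin{smallmatrix}1 & \rho/w_\Delta^+ \\ 0 & 1\end{smallmatrix}\bigr)$ on $\bigcup\Delta_j$. Solvability of \rhy{} for $n\in\N_\varepsilon$ large will be a byproduct of the construction. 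I would then perform the standard transformations $Y\mapsto T\mapsto S$: first conjugate by $\mathrm{diag}\bigl(\map(z^{(0)})^{-n},\map(z^{(0)})^n\bigr)$ (times a constant) to make the problem bounded at infinity via \eqref{phi}--\eqref{ineq1}, then factor the resulting triangular jump into a product of two near-identity lower-triangular matrices sandwiching the off-diagonal matrix $\bigl(\begin{smallmatrix}0 & \rho/w_\Delta^+ \\ -w_\Delta^+/\rho & 0\end{smallmatrix}\bigr)$ and open lenses around each arc $\Delta_j$. This produces $S$ with jumps $I+O(C^{-n})$ on the lens boundaries (by \eqref{ineq1}) and a purely off-diagonal jump on $\bigcup\Delta_j$.

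The heart of the proof is the global parametrix, which I would take to be
\[
N:=\begin{pmatrix}\gamma_n\Psi_n & \gamma_n^*\Psi_{n-1} \\[2pt] \gamma_n\Psi_n^*/w_\Delta & \gamma_n^*\Psi_{n-1}^*/w_\Delta\end{pmatrix},
\]
using the notation of \eqref{Psi}, \eqref{Psistar}, and \eqref{gammans}. Relation \eqref{Psistarjumps} together with $w_\Delta^+=-w_\Delta^-$ on $\bigcup\Delta_j$ delivers exactly the off-diagonal jump satisfied by $S$, while the scalars $\gamma_n,\gamma_n^*$ from \eqref{gammans} provide the correct unit normalization at infinity after undoing the diagonal conjugation from the $Y\mapsto T$ step. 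The key local check is that, since $\rho$ is holomorphic and non-vanishing near $\Delta$, Theorem~\ref{thm:Psi_n} gives that $\Psi_n,\Psi_{n-1}$ are bounded at $\boldsymbol E_\Delta$, so $N$ inherits only the mild $(z-e)^{-1/2}$ singularity coming from $w_\Delta^{-1}$ at the points of $E_\Delta$ and stays bounded at all other points of $E_0\cup E_1$, matching the admissible behavior of $S$. No Airy or Bessel local parametrices are required, as in \cite{ApVA04}, and the same $N$ works at every vertex of $\Delta$, irrespective of its bifurcation index, including the $i(e)\geq 4$ case excluded from \cite{uAY}.

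To close, I set $Z:=S\,N^{-1}$; this $Z$ is holomorphic off the lens boundaries, tends to $I$ at infinity, and has jumps $I+O(C^{-n})$ on a fixed contour. Uniform invertibility of $N$ for $n\in\N_\varepsilon$ is supplied by Lemma~\ref{lem:estimates}: \eqref{productgammas} controls the infinity normalization (essentially $\det N$) while \eqref{maxPsi} controls the columns. The standard small-norm theorem then yields $Z=I+O(C_\varepsilon^{-n})$ uniformly on $\overline{\C}$ with $(Z-I)(\infty)=O(C_\varepsilon^{-n})$. Unwinding $Y=Z\,N$ together with the diagonal and lens conjugations (the latter being the identity on a full neighborhood of $\infty$ and off the lenses) and reading off the $(1,1)$ and $(1,2)$ entries produces exactly \eqref{SA1}, with $\upsilon_{n1},\upsilon_{n2}$ identified as appropriate entries of $Z-I$. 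I expect the main technical obstacle to be the parametrix step: verifying that the explicit $N$ built from $\Psi_n,\Psi_{n-1}$ truly satisfies the target RH problem at every vertex type permitted by Definition~\ref{SContour} with only power-type (not essential) singularities is precisely what requires, and is made possible by, the Riemann surface machinery of Section~\ref{sec:bvp} together with the restriction to $n\in\N_\varepsilon$ from Definition~\ref{Nvarepsilon}.
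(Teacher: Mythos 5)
Your overall strategy (Fokas--Its--Kitaev problem for $\boldsymbol Y$, a global parametrix built from the functions $\Psi_n$ of \eqref{Psi}, a small-norm residual problem, and reading off the first-row entries) is the same as the paper's, but two of your intermediate steps would fail as written. First, the ``$Y\mapsto T$'' conjugation by $\mathrm{diag}\bigl(\Phi(z^{(0)})^{-n},\Phi(z^{(0)})^{n}\bigr)$ is not available for $g\geq1$: by \eqref{jumpa} the pullback of $\Phi$ to $D$ is only multiplicatively single-valued, with constant unimodular jumps across the projections of the $\ualpha$- and $\ubeta$-cycles, so this step creates new jump contours inside $D$ whose jumps are not close to the identity and which a parametrix built from the single-valued functions $\Psi_n,\Psi_n^*$ cannot absorb. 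This is precisely why the paper performs no $g$-function step at all: the $n$-dependence, including the theta-functions $\Theta_n$ tied to the Jacobi inversion problem \eqref{main-jip} and the corrections $S_{n\vec\tau+\vec m_n}$, is packaged into $\Psi_n$, and the parametrix \eqref{eq:n} is normalized by ${\boldsymbol N}(z)z^{-n\sigma_3}\to{\boldsymbol I}$ rather than ${\boldsymbol N}\to{\boldsymbol I}$. Second, your claim that lenses opened around each arc $\Delta_j$ carry jumps ${\boldsymbol I}+O(C^{-n})$ ``by \eqref{ineq1}'' is not uniform: $g_D(z)\to0$ as $z\to e\in E_0\cup E_1$, and lens boundaries attached to an individual arc must approach its endpoints (at a point of $E_1$ a loop around one arc would cross the neighboring arcs), so the decay degenerates exactly where local (Airy/Bessel-type) parametrices are normally unavoidable --- contradicting your assertion that none are needed. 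The mechanism by which \cite{ApVA04} and this paper avoid local analysis is different: since $\rho$ is holomorphic in a full neighborhood, the lower-triangular factor is moved in one stroke onto a fixed closed contour $\Gamma$ encircling each component of $\Delta$ at positive distance (transformation \eqref{eq:x}); the smallness of the resulting jump $\widetilde{\boldsymbol N}\bigl(\begin{smallmatrix}1&0\\ w_\Delta/\rho&1\end{smallmatrix}\bigr)\widetilde{\boldsymbol N}^{-1}={\boldsymbol I}+O\bigl(C^{-2n}\bigr)$ on $\Gamma$ comes not from $\Phi$ but from Lemma~\ref{lem:estimates}: \eqref{maxPsi} makes $\Psi_n^*,\Psi_{n-1}^*$ geometrically small on the fixed compact $\Gamma\subset D$, and \eqref{productgammas} keeps $\gamma_n\gamma_n^*$ bounded for $n\in\N_\varepsilon$; no part of the remaining contour touches $E_0\cup E_1$.

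Two further points. Your parametrix is transposed relative to what the right-multiplied jump requires: with ${\boldsymbol N}_+={\boldsymbol N}_-\bigl(\begin{smallmatrix}0&\rho/w_\Delta^+\\ -w_\Delta^+/\rho&0\end{smallmatrix}\bigr)$ the pairs $(\Psi_n,\Psi_n^*/w_\Delta)$ and $(\Psi_{n-1},\Psi_{n-1}^*/w_\Delta)$ must occupy the \emph{rows}, as in \eqref{eq:n}; as you wrote it, the $(1,2)$ entry $\gamma_n^*\Psi_{n-1}$ grows like $z^{n-1}$ and no normalization at infinity can hold. Also, since \eqref{assumption} (hence existence of $\boldsymbol Y$ and of your $S$) is not known a priori, you cannot simply set $Z:=S{\boldsymbol N}^{-1}$; the paper argues in the opposite direction: it solves the small-norm problem for $\boldsymbol Z$ on $\Gamma$ first (Lemma~\ref{lem:rhr}), builds $\boldsymbol X$ and $\boldsymbol Y$ from $\boldsymbol Z$ and $\widetilde{\boldsymbol N}$, and only then identifies the entries with $Q_n$ and $w_\Delta R_n$ via the uniqueness statement of Lemma~\ref{lem:rhy}; the uniformity of the bound $|\upsilon_{nj}|\leq C_\varepsilon^{-n}$ on all of $\overline\C$, including across $\Gamma$, is obtained by comparing two homotopic choices of $\Gamma$. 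You do say solvability will be a byproduct, but your construction as described presupposes it.
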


In the case where $g>0$, formulae \eqref{SA1} clearly indicate the absence of uniform convergence of Pad\'e approximants. Indeed, the error of approximation is equal to
\[
f_\rho - [n/n]_{f_\rho} = \frac{R_n}{Q_n} = \frac1{w_\Delta} \frac{\Psi_n^*}{\Psi_n}\frac{1+\upsilon_{n1}+\upsilon_{n2}(\gamma_n^*/\gamma_n)\big(\Psi_{n-1}^*/\Psi_n^*\big)}{1+\upsilon_{n1}+\upsilon_{n2}(\gamma_n^*/\gamma_n)\big(\Psi_{n-1}/\Psi_n\big)}.
\]
We do know from Lemma \ref{lem:estimates} that the functions $\Psi_n^*$ are geometrically small on closed subsets of $D$. Similar argument can be used to show that the functions $\Psi_n$ are geometrically large in $D$ except for possible zeros described by those elements of the divisor $\mathcal{D}_n$ that belong to $D^{(0)}$ (Rouch\'e's theorem clearly implies that $Q_n$ has a zero close to the canonical projection of each such element) and those zeros are the sole reason why the uniform convergence does not hold. In the ``generic case'', i.e., when $1$ and the periods \eqref{constants} of the Green differential $\mathrm{d}G$ are rationally independent, it is known \cite[pages 190--191]{Wid69} that the divisors $\mathcal{D}_n$ are dense in $\RS^g/\Sigma_g$ and hence one will definitely observe the presence of wandering poles. However, in this generic case, there always exists a subsequence of indices such that the elements of the divisors $\mathcal{D}_n$ belong only to $D^{(1)}$ \cite[Sec.~4.1]{Suet00} and therefore there always exists a subsequence along which Pad\'e approximants $[n/n]_{f_\rho}$ converge to $f_\rho$ locally uniformly in $D$.

The remaining part of this section is devoted to the proof of Theorem~\ref{thm:main}.

\subsection{Initial R-H Problem}

Below, we follow by now classical approach of Fokas, Its, and Kitaev \cite{FIK91,FIK92} connecting orthogonal polynomials to matrix Riemann-Hilbert problems. To this end, assume that the index $n$ is such that
\begin{equation}
\label{assumption}
\deg(Q_n)=n \quad \text{and} \quad R_{n-1}(z)\sim z^{-n} \quad \text{as} \quad z\to\infty.
\end{equation}
Define
\begin{equation}
\label{eq:y}
{\boldsymbol Y} = \left(\begin{array}{cc}
Q_n & R_n \\
m_{n-1}Q_{n-1} & m_{n-1}R_{n-1}
\end{array}\right),
\end{equation}
where $m_n$ is a constant such that $m_{n-1}R_{n-1}(z)=z^{-n}[1+o(1)]$ near infinity. Then $\boldsymbol{Y}$ solves the following matrix Riemann-Hilbert problem (\rhy) :
\begin{itemize}
\label{rhy}
\item[(a)] ${\boldsymbol Y}$ is analytic in $\overline\C\setminus\Delta$ and $\displaystyle \lim_{z\to\infty} {\boldsymbol Y}(z)z^{-n\sigma_3} = {\boldsymbol I}$, where ${\boldsymbol I} = \left(\begin{array}{cc} 1&0\\0& 1 \end{array}\right)$ and $\displaystyle \sigma_3 = \left(\begin{array}{cc} 1&0\\0&-1 \end{array}\right)$;
\item[(b)] ${\boldsymbol Y}$ has continuous traces on $\bigcup\Delta_j$ that satisfy $\displaystyle {\boldsymbol Y}_+ = {\boldsymbol Y}_- \left(\begin{array}{cc}1&\rho/w_\Delta^+\\0&1\end{array}\right);$
\item[(c)] ${\boldsymbol Y}$ is bounded near each $e\in (E_0\cup E_1)\setminus E_\Delta$ and the behavior of ${\boldsymbol Y}$ near each $e\in E_\Delta$ is described by $\displaystyle O\left(\begin{array}{cc}1&|z-e|^{-1/2}\\1&|z-e|^{-1/2}\end{array}\right)$ as $D\ni z\to e$.
\end{itemize}

The property \hyperref[rhy]{\rhy}(a) follows immediately from \eqref{linsys} and \eqref{assumption}. The property \hyperref[rhy]{\rhy}(b) is due to the equality
\[
R_n^+ - R_n^- = Q_n\left(f_\rho^+ - f_\rho^-\right) = Q_n\rho/w_\Delta^+ \quad \text{on} \quad \bigcup\Delta_j,
\]
which in itself is a consequence of \eqref{linsys}, \eqref{f_rho}, and the Sokhotski-Plemelj formulae \cite[Section~4.2]{Gakhov}. Finally, to show \hyperref[rhy]{\rhy}(c), write, $R_n=\sum_k R_{nk}$, where
\[
R_{nk}(z) := \frac1{2\pi \mathrm{i}}\int_{\Delta_k}\frac{(Q_n\rho/w_\Delta^+)(t)}{t-z}\mathrm{d}t
\]
and therefore the behavior of $R_n$ near $e\in E_0\cup E_1$ is deduced from the behavior $R_{nk}$ there. If the endpoint $e$ of $\Delta_k$ has an odd bifurcation index ($e\in E_\Delta$), then $w_\Delta^2$ has a simple zero there and therefore $|R_{nk}(z)|\sim|z-e|^{-1/2}$ as $z\to e$, see \cite[Section~3]{BY13}. On the other hand, if $e$ has an even bifurcation index ($e\in (E_0\cup E_1)\setminus E_\Delta$), the respective function $R_{nk}$ behaves as 
\[
\frac{\rho(e)w^+_{\Delta|\Delta_k}(e)}{2\pi\mathrm{i}}\log(z-e)+R_{e,k}^*(z)
\]
according to \cite[Section~8.1]{Gakhov}, where the function $R_{e,k}^*$ has a definite limit at $e$ and the logarithm is holomorphic outside of $\Delta_k$. Since $w_\Delta$ does not have a branch point at $e$, it holds that $\sum_kw^+_{\Delta|\Delta_k}(e)=0$, where the sum is taken over all arcs $\Delta_k$ incident with $e$. Thus, we get that
\[
R(z) = \frac{\rho(e)}{2\pi}\sum_k\arg_{e,k}(z-e) + R_e^*(z),
\]
where $R_e^*$ has a definite limit at $e$, $\arg_{e,k}(z-e)$ has the branch cut along $\Delta_k$, and the sum is again taken over all arcs incident with $e$. Thus, $\boldsymbol{Y}$ is bounded in the vicinity of each $e$ with even bifurcation index.

To show that a solution of \hyperref[rhy]{\rhy}, if exists, must be of the form \eqref{eq:y} is by now a standard exercise, see for instance, \cite[Lemma~2.3]{KMcLVAV04}, \cite{BY10}, \cite[Lemma~1]{uAY}. Thus, we proved the following lemma.

\begin{lemma}
\label{lem:rhy}
If a solution of \hyperref[rhy]{\rhy} exists then it is unique. Moreover, in this case it is given by \eqref{eq:y} where $Q_n$ and $R_{n-1}$ satisfy \eqref{assumption}. Conversely, if \eqref{assumption} is fulfilled, then \eqref{eq:y} solves \hyperref[rhy]{\rhy}.
\end{lemma}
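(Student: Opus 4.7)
The converse direction of the lemma (that $\boldsymbol{Y}$ defined by \eqref{eq:y} solves \hyperref[rhy]{\rhy} whenever \eqref{assumption} holds) has essentially been established in the discussion preceding the lemma: property \hyperref[rhy]{\rhy}(a) follows from \eqref{linsys} and \eqref{assumption}; property \hyperref[rhy]{\rhy}(b) comes from the Sokhotski-Plemelj computation of the jump of $R_n$; and property \hyperref[rhy]{\rhy}(c) was derived through the endpoint analysis of the Cauchy integrals $R_{nk}$. So the bulk of the work is establishing uniqueness and the correct identification of entries.

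The first step I would take is to verify that $\det \boldsymbol{Y} \equiv 1$. The jump matrix in \hyperref[rhy]{\rhy}(b) has unit determinant, so $\det \boldsymbol{Y}$ is analytic across $\bigcup \Delta_j$. Condition \hyperref[rhy]{\rhy}(c) yields $\det \boldsymbol{Y}(z) = O(|z-e|^{-1/2})$ near each $e \in \boldsymbol E_\Delta$, so these singularities are removable, while near points in $(E_0 \cup E_1)\setminus E_\Delta$ the determinant is bounded. Condition \hyperref[rhy]{\rhy}(a) gives $\det \boldsymbol{Y}(z) \to 1$ at infinity, so by Liouville $\det \boldsymbol{Y} \equiv 1$ and in particular $\boldsymbol{Y}$ is everywhere invertible on $\overline\C \setminus \Delta$.

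Next I would extract the first column. Since the factor $\rho/w_\Delta^+$ sits only in the $(1,2)$ entry of the jump matrix, the entries $Y_{11}, Y_{21}$ are jump-free across $\bigcup \Delta_j$ and, by \hyperref[rhy]{\rhy}(c), are $O(1)$ at every point of $E_0 \cup E_1$ (the square-root singularity lives only in the second column). Hence they extend to entire functions, which by the growth from \hyperref[rhy]{\rhy}(a) must be polynomials: $Y_{11}$ monic of degree exactly $n$ and $Y_{21}$ of degree at most $n-1$. Call these $Q_n$ and $c\, Q_{n-1}$ respectively. For the second column, set $\tilde R_n(z) := \frac{1}{2\pi\mathrm{i}}\int_\Delta \frac{(Q_n\rho/w_\Delta^+)(t)}{t-z}\mathrm{d}t$. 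By Sokhotski-Plemelj this has exactly the same jump as $Y_{12}$, so $Y_{12} - \tilde R_n$ is analytic across the arcs, at worst $O(|z-e|^{-1/2})$ near $\boldsymbol E_\Delta$ (hence removable), and vanishes at infinity; Liouville then forces $Y_{12} = \tilde R_n = R_n$. The same argument identifies $Y_{22}$ with the Cauchy transform of $cQ_{n-1}\rho/w_\Delta^+$, and matching the $z^{-n}$ normalization at infinity prescribed by \hyperref[rhy]{\rhy}(a) pins $c = m_{n-1}$ and produces the relation $R_{n-1}(z) \sim z^{-n}$. Combined with $\deg Q_n = n$, this is precisely \eqref{assumption}.

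The main obstacle is the endpoint analysis at the branch points $\boldsymbol E_\Delta$: one must be careful that the $|z-e|^{-1/2}$ growth permitted by \hyperref[rhy]{\rhy}(c) in the second column neither leaks into the first column (killing the polynomial argument) nor aggregates into a non-integrable singularity of $\det \boldsymbol{Y}$ or of $Y_{12} - \tilde R_n$. Once one has verified that the prescribed endpoint orders are sharp enough to allow removable-singularity extensions but loose enough to be consistent with the known behavior of the Cauchy transforms $R_{nk}$, the remainder of the proof reduces to the standard Liouville/Sokhotski-Plemelj identification scheme of \cite{FIK91,FIK92}, with no further subtlety.
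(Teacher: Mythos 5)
Your verification that \eqref{eq:y} solves \hyperref[rhy]{\rhy} under \eqref{assumption} is the same as the paper's (this is exactly what the discussion preceding the lemma establishes), and your plan for the direct implication follows the standard Fokas--Its--Kitaev identification scheme that the paper delegates to the cited references. Most of your steps are sound: $\det\boldsymbol{Y}\equiv1$ by removability and Liouville; the first column is jump-free and $O(1)$ at all of $E_0\cup E_1$, hence $[\boldsymbol{Y}]_{11}$ is a monic polynomial of degree exactly $n$ and $[\boldsymbol{Y}]_{21}$ a polynomial of degree at most $n-1$; and the second column is recovered as the Cauchy transform of the first column times $\rho/w_\Delta^+$ via Sokhotski--Plemelj and Liouville (near points of even bifurcation index the Cauchy transform is at worst logarithmic, which is still covered by Riemann's removability criterion).

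The genuine gap is in the final identification, i.e., precisely where \eqref{assumption} has to be produced. From $[\boldsymbol{Y}]_{11}$ monic of degree $n$ with $[\boldsymbol{Y}]_{12}=[\boldsymbol{Y}]_{11}f_\rho-(\mbox{polynomial})=O(z^{-n-1})$ you may only conclude, by the discussion of \eqref{linsys} in the introduction, that $[\boldsymbol{Y}]_{11}=LQ_n$ for some monic polynomial $L$ of degree $n-\deg Q_n$; likewise the second row gives $[\boldsymbol{Y}]_{21}=MQ_{n-1}$ and $[\boldsymbol{Y}]_{22}=MR_{n-1}$ for some polynomial $M$, and the normalization $z^n[\boldsymbol{Y}]_{22}\to1$ yields only $MR_{n-1}\sim z^{-n}$, not $R_{n-1}\sim z^{-n}$: if $\deg M\geq1$, then $R_{n-1}$ decays faster and \eqref{assumption} fails. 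So writing ``call these $Q_n$ and $cQ_{n-1}$'' and ``the normalization pins $c=m_{n-1}$ and produces $R_{n-1}\sim z^{-n}$'' assumes exactly what the lemma asserts. The missing step is short and uses the identity you already proved but exploited only for invertibility: $1\equiv\det\boldsymbol{Y}=LM\big(Q_nR_{n-1}-R_nQ_{n-1}\big)=LM\big(Q_{n-1}P_n-Q_nP_{n-1}\big)$, a product of polynomials, which forces $L$ and $M$ to be nonzero constants; since $L$ is monic, $L\equiv1$, hence $\deg Q_n=n$, $[\boldsymbol{Y}]_{11}=Q_n$, $[\boldsymbol{Y}]_{12}=R_n$, and then $MR_{n-1}\sim z^{-n}$ with $M$ constant gives $R_{n-1}\sim z^{-n}$ and $M=m_{n-1}$. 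With this inserted, your argument simultaneously identifies any solution with \eqref{eq:y} (hence uniqueness) and derives \eqref{assumption}, completing the ``standard exercise'' the paper cites.
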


\subsection{Transformed R-H Problem}

It can be directly verified that
\[
\left(\begin{array}{cc} 1 & 0 \\ -w_\Delta^-/\rho & 1 \end{array}\right) \left(\begin{array}{cc} 0 & \rho/w_\Delta^+ \\ -w_\Delta^+/\rho & 0 \end{array}\right) \left(\begin{array}{cc} 1 & 0 \\ w_\Delta^+/\rho & 1 \end{array}\right) = \left(\begin{array}{cc} 1 & \rho/w_\Delta^+ \\ 0 & 1 \end{array}\right).
\]
This factorization of the jump matrix in \hyperref[rhy]{\rhy}(b) suggests the following transformation of $\boldsymbol{Y}$:
\begin{equation}
\label{eq:x}
{\boldsymbol X}:= \left\{
\begin{array}{ll}
{\boldsymbol Y} \left(\begin{array}{cc}1&0 \\ -w_\Delta/\rho & 1 \end{array}\right), & \mbox{in} \quad \Omega, \smallskip \\
{\boldsymbol Y}, & \mbox{in} \ \C\setminus\overline\Omega,
\end{array}
\right.
\end{equation}
where $\Omega$ is an open set bounded by $\Delta$ and $\Gamma$ and $\Gamma$ is a union of simple Jordan curves each encompassing one connected component of $\Delta$ and chosen so $\rho$ is holomorphic across $\Gamma$. It is trivial to verify that ${\boldsymbol X}$ solves the following Riemann-Hilbert problem (\rhs):
\begin{itemize}
\label{rhs}
\item[(a)] ${\boldsymbol X}$ is analytic in $\C\setminus(\Delta\cup\Gamma)$ and $\displaystyle \lim_{z\to\infty} {\boldsymbol X}(z)z^{-n\sigma_3} = {\boldsymbol I}$;
\item[(b)] ${\boldsymbol X}$ has continuous traces on $\bigcup\Delta_j\cup\Gamma$ that satisfy
\[
{\boldsymbol X}_+={\boldsymbol X}_- \left\{
\begin{array}{rl}
\displaystyle \left(\begin{array}{cc} 0 & \rho/w_\Delta^+ \\ -w_\Delta^+/\rho & 0 \end{array}\right) & \text{on} \quad \bigcup\Delta_j  \smallskip \\
\displaystyle \left(\begin{array}{cc} 1 & 0 \\ w_\Delta/\rho & 1 \end{array}\right) & \text{on} \quad \Gamma;
\end{array}
\right.
\]
\item[(c)] ${\boldsymbol X}$ has the behavior near $e\in E_0\cup E_1$ described by \hyperref[rhy]{\rhy}(c).
\end{itemize}

Then the following lemma can be easily checked.

\begin{lemma}
\label{lem:rhs}
\hyperref[rhs]{\rhs} is solvable if and only if \hyperref[rhy]{\rhy} is solvable. When solutions of \hyperref[rhs]{\rhs} and \hyperref[rhy]{\rhy} exist, they are unique and connected by \eqref{eq:x}.
\end{lemma}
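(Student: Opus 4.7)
The plan is to verify that \eqref{eq:x} sets up a bijection between solutions of \hyperref[rhy]{\rhy} and solutions of \hyperref[rhs]{\rhs}. Since the right multiplier $\left(\begin{array}{cc} 1 & 0 \\ -w_\Delta/\rho & 1 \end{array}\right)$ is unimodular and holomorphic throughout $\Omega$ (the curve $\Gamma$ is chosen so that $\rho$ is holomorphic and non-vanishing across it, hence in all of $\Omega$, while $w_\Delta$ is holomorphic off $\Delta$), the transformation has an obvious inverse: multiply by $\left(\begin{array}{cc} 1 & 0 \\ w_\Delta/\rho & 1 \end{array}\right)$ in $\Omega$ and leave the matrix unchanged on $\C\setminus\overline\Omega$. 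Once the bijection is established at the level of the analytic/jump/endpoint conditions, existence and uniqueness for one problem transfer immediately to the other, and uniqueness for \hyperref[rhy]{\rhy} is already available via Lemma~\ref{lem:rhy}.

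The checks I would carry out, in order, are the following. First, analyticity of $\boldsymbol{X}$ in $\C\setminus(\Delta\cup\Gamma)$ and the normalization $\boldsymbol{X}(z)z^{-n\sigma_3}\to\boldsymbol{I}$ are immediate, the latter because $\boldsymbol{X}\equiv\boldsymbol{Y}$ in a neighborhood of $\infty$. Second, the jump of $\boldsymbol{X}$ on $\bigcup\Delta_j$ is obtained by computing $\boldsymbol{X}_-^{-1}\boldsymbol{X}_+$ from the two-sided values of \eqref{eq:x}, substituting the jump of $\boldsymbol{Y}$ from \hyperref[rhy]{\rhy}(b), and then invoking the matrix factorization displayed immediately above \eqref{eq:x} together with $w_\Delta^+=-w_\Delta^-$ on $\bigcup\Delta_j$. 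Third, on $\Gamma$ the matrix $\boldsymbol{Y}$ is analytic, so the jump of $\boldsymbol{X}$ is precisely the triangular factor appearing in \eqref{eq:x}, with the orientation of $\Gamma$ fixed so as to produce the sign demanded by \hyperref[rhs]{\rhs}(b). The reverse direction is the same chain of checks applied to $\boldsymbol{Y}$ reconstructed from $\boldsymbol{X}$ via the inverse transformation.

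The one step that I expect to genuinely require care is the endpoint clause \hyperref[rhs]{\rhs}(c) at branch points $e\in E_\Delta$: there $w_\Delta$ vanishes like $|z-e|^{1/2}$ while the second column of $\boldsymbol{Y}$ blows up like $|z-e|^{-1/2}$, so a priori the product could fail to be controlled. A direct multiplication shows, however, that the first column of $\boldsymbol{X}$ gains only an additional $O(|z-e|^{1/2})\cdot O(|z-e|^{-1/2})=O(1)$ contribution and thus remains bounded, while the second column of $\boldsymbol{X}$ coincides with that of $\boldsymbol{Y}$; hence the prescribed endpoint bound is preserved. At the remaining points $e\in(E_0\cup E_1)\setminus E_\Delta$ the factor $w_\Delta/\rho$ is holomorphic and bounded, so boundedness of $\boldsymbol{Y}$ transfers to $\boldsymbol{X}$ verbatim. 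Collecting the checks in both directions yields the equivalence, and uniqueness for \hyperref[rhs]{\rhs} then follows from Lemma~\ref{lem:rhy} together with the bijection.
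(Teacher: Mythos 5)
Your verification is correct and follows exactly the route the paper intends: the displayed factorization above \eqref{eq:x} handles the jump on $\bigcup\Delta_j$, the unimodular triangular factor gives the explicit inverse transformation, and the only delicate point—preservation of the endpoint behavior at $e\in E_\Delta$, where the $O(|z-e|^{1/2})$ vanishing of $w_\Delta$ cancels the $O(|z-e|^{-1/2})$ growth of the second column—is handled as the paper's ``easily checked'' assertion presumes. Nothing further is needed.
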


\subsection{Asymptotics in the Bulk}
\label{sec:bulk}

Let $\Psi_n,\Psi_n^*$ be defined by \eqref{Psistar} and $\gamma_n,\gamma_n^*$ be as in \eqref{gammans}. Set
\begin{equation}
\label{eq:n}
{\boldsymbol N} := \left(\begin{array}{cc} \gamma_n & 0 \smallskip \\ 0 &  \gamma_n^* \end{array}\right) \widetilde{\boldsymbol N}, \quad \widetilde{\boldsymbol N}:=\left(\begin{array}{cc} \Psi_n & \Psi_n^*/w_\Delta \smallskip \\ \Psi_{n-1} &  \Psi_{n-1}^*/w_\Delta \end{array}\right).
\end{equation}
Then $\boldsymbol{N}$ solves the following Riemann-Hilbert problem (\rhn):
\begin{itemize}
\label{rhn}
\item[(a)] ${\boldsymbol N}$ is analytic in $\C\setminus\Delta$ and $\displaystyle \lim_{z\to\infty} {\boldsymbol N}(z)z^{-n\sigma_3} = {\boldsymbol I}$;
\item[(b)] ${\boldsymbol N}$ has continuous traces on $\bigcup\Delta_j$ that satisfy $\displaystyle {\boldsymbol N}_+={\boldsymbol N}_-\left(\begin{array}{cc} 0 & \rho/w_\Delta^+ \\ -w_\Delta^+/\rho & 0 \end{array}\right)$;
\item[(c)] ${\boldsymbol N}$ has the behavior near $e\in E_0\cup E_1$ described by \hyperref[rhy]{\rhy}(c).
\end{itemize}

Indeed, \hyperref[rhn]{\rhn}(a) follows immediately from the analyticity properties of $\Psi_n,\Psi_n^*$ and \eqref{gammans}. \hyperref[rhn]{\rhn}(b) can be easily checked by using \eqref{Psistarjumps}. Finally, \hyperref[rhn]{\rhn}(c) is the consequences of the boundedness of $\Psi_n^\pm$ and $(\Psi_n^*)^\pm$ on $\bigcup\Delta_j$ and the definition of $w_\Delta$.

Moreover, it can be readily checked that $\det(\boldsymbol{N})$ is a holomorphic function in $\overline\C\setminus (E_0\cup E_1)$ and $\det(\boldsymbol{N})(\infty)=1$. Since it is either bounded or behaves like $O\left(|z-e|^{-1/2}\right)$ near $e\in E_0\cup E_1$, those points are in fact removable singularities and therefore $\det(\boldsymbol{N})$ is a bounded entire function. That is, $\det(\boldsymbol{N})\equiv1$ as follows from the normalization at infinity.

\subsection{Final R-H Problem}
Consider the following Riemann-Hilbert Problem (\rhr):
\begin{itemize}
\label{rhr}
\item[(a)] $\boldsymbol{Z}$ is a holomorphic matrix function in $\overline\C\setminus\Gamma$ and $\boldsymbol{Z}(\infty)=\boldsymbol{I}$;
\item[(b)] $\boldsymbol{Z}$ has continuous traces on $\Gamma$ that satisfy $\displaystyle \boldsymbol{Z}_+  = \boldsymbol{Z}_- \widetilde{\boldsymbol N} \left(\begin{array}{cc} 1 & 0 \\ w_\Delta/\rho & 1 \end{array}\right) \widetilde{\boldsymbol N}^{-1}$.
\end{itemize}
Then the following lemma takes place.
\begin{lemma}
\label{lem:rhr}
The solution of \hyperref[rhr]{\rhr} exists for all $n\in\N_\varepsilon$ large enough and satisfies
\begin{equation}
\label{eq:r}
\boldsymbol{Z}=\boldsymbol{I}+O\big( C_\varepsilon^{-n}\big)
\end{equation}
for some constant $C_\varepsilon>1$ independent of $\Gamma$, where $O(\cdot)$ holds uniformly in $\overline\C$.
\end{lemma}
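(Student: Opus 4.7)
The plan is to reduce \hyperref[rhr]{\rhr} to a standard small-norm Riemann-Hilbert problem by showing that the jump matrix on $\Gamma$ is exponentially close to the identity along $\N_\varepsilon$, and then to invoke the classical perturbative theory of singular integral equations.

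First I would compute the jump matrix explicitly. Since $\det\boldsymbol N\equiv 1$ and $\boldsymbol N=\mathrm{diag}(\gamma_n,\gamma_n^*)\,\widetilde{\boldsymbol N}$, one has $\det\widetilde{\boldsymbol N}=(\gamma_n\gamma_n^*)^{-1}$, and a short matrix calculation gives
\[
\widetilde{\boldsymbol N}\begin{pmatrix}1 & 0 \\ w_\Delta/\rho & 1\end{pmatrix}\widetilde{\boldsymbol N}^{-1}-\boldsymbol I \;=\; \frac{\gamma_n\gamma_n^*}{\rho\, w_\Delta}\begin{pmatrix}\Psi_n^*\Psi_{n-1}^* & -(\Psi_n^*)^2 \\ (\Psi_{n-1}^*)^2 & -\Psi_n^*\Psi_{n-1}^*\end{pmatrix}.
\]
The key structural fact is that every entry of the right-hand side carries two factors drawn from $\{\Psi_n^*,\Psi_{n-1}^*\}$ times a harmless prefactor.

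Second I would estimate the size of this matrix on $\Gamma$. The contour $\Gamma$ is a compact subset of $\overline\C\setminus\Delta$ chosen inside the domain of holomorphy of $\rho$, so both $1/\rho$ and $1/w_\Delta$ are uniformly bounded on $\Gamma$, while \eqref{productgammas} bounds $|\gamma_n\gamma_n^*|$ uniformly in $n\in\N_\varepsilon$. Viewing $\Gamma$ as a compact subset of $D^{(1)}$ via $z\mapsto z^{(1)}$, Lemma~\ref{lem:estimates} furnishes $C_\varepsilon>1$ with $\max_\Gamma\bigl(|\Psi_n^*|+|\Psi_{n-1}^*|\bigr)\le C_\varepsilon^{-n}$. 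Consequently the jump matrix minus the identity is of size $O(C_\varepsilon^{-2n})$ in $L^\infty(\Gamma)\cap L^2(\Gamma)$.

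Third I would invoke the standard small-norm Riemann-Hilbert machinery, as in \cite[Lemma~2.3]{KMcLVAV04} or the companion paper \cite{uAY}. Representing $\boldsymbol Z(z)=\boldsymbol I+(2\pi\mathrm{i})^{-1}\int_\Gamma\mu(s)(s-z)^{-1}\mathrm{d}s$ converts \hyperref[rhr]{\rhr} into a singular integral equation $(I-\mathcal K_n)\mu=J-\boldsymbol I$ on $\Gamma$, where $\mathcal K_n\nu:=\mathcal C_-[\nu(J-\boldsymbol I)]$ and $\mathcal C_-$ is the boundary Cauchy operator on $\Gamma$. Boundedness of $\mathcal C_-$ on $L^2(\Gamma)$ together with the exponential smallness of $J-\boldsymbol I$ make $I-\mathcal K_n$ invertible via Neumann series for all sufficiently large $n\in\N_\varepsilon$, producing $\|\mu\|_{L^2(\Gamma)}=O(C_\varepsilon^{-n})$ and, after integrating, $\boldsymbol Z-\boldsymbol I=O(C_\varepsilon^{-n})$ uniformly in $\overline\C$ (pointwise estimates close to $\Gamma$ follow by slightly deforming the contour). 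The entire argument is routine once Lemma~\ref{lem:estimates} is available; the only place where the restriction $n\in\N_\varepsilon$ is genuinely needed is to secure the bound on $\gamma_n\gamma_n^*$, which is precisely why Definition~\ref{Nvarepsilon} was introduced, and this is the only mildly delicate point in the argument.
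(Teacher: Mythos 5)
Your proposal is correct and follows essentially the same route as the paper: the same explicit computation of the jump matrix, the same $O(C_\varepsilon^{-2n})$ bound via Lemma~\ref{lem:estimates} and \eqref{productgammas}, and the same small-norm conclusion (the paper simply cites \cite[Corollary~7.108]{Deift} where you unpack the Neumann-series argument). Your closing remark about deforming the contour to get the estimate uniformly up to and on $\Gamma$ is precisely the paper's argument with a second curve $\widetilde\Gamma$ and analytic continuation of the two solutions into one another.
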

\begin{proof}
Since $\det(\boldsymbol{N})\equiv1$ and therefore $\det(\widetilde{\boldsymbol{N}})\equiv1/(\gamma_n\gamma_n^*)$, the jump matrix for $\boldsymbol{Z}$ is equal to
\[
\boldsymbol{I} + \frac{\gamma_n\gamma_n^*}{\rho w_\Delta} \left(\begin{array}{cc} \Psi_n^*\Psi_{n-1}^* & -\big(\Psi_n^*\big)^2 \smallskip \\ \big(\Psi_{n-1}^*\big)^2 &  -\Psi_n^*\Psi_{n-1}^* \end{array}\right) = \boldsymbol{I} + O\big( C_{\varepsilon,\Gamma}^{-2n}\big),
\]
where the last equality follows from Lemma~\ref{lem:estimates}. Therefore, according to \cite[Corollary~7.108]{Deift}, \hyperref[rhr]{\rhr} is solvable for all $n\in\N_\varepsilon$ large enough and $\boldsymbol{Z}_\pm$ converge to zero on $\Gamma$ in $L^2$-sense geometrically fast. The latter yields \eqref{eq:r} locally uniformly in $\overline\C\setminus\Gamma$ with some constant $C_{\varepsilon,\Gamma}^*>1$. Consider $\widetilde\Gamma$ homotopic to and disjoint from $\Gamma$ which also lies within the domain of analyticity of $\rho$. The above considerations yield a solution $\widetilde{\boldsymbol{Z}}$ of  \hyperref[rhr]{\rhr}~ with the jump matrix defined on $\widetilde\Gamma$ rather than $\Gamma$. As the jump matrices for $\widetilde{\boldsymbol{Z}}$ and $\boldsymbol{Z}$ are analytic continuations of each other, so are the solutions $\widetilde{\boldsymbol{Z}}$ and $\boldsymbol{Z}$. Hence, \eqref{eq:r} indeed holds with $C_\varepsilon:=\min\left\{C_{\varepsilon,\Gamma}^*,C_{\varepsilon,\widetilde\Gamma}^*\right\}$.
\end{proof}

\subsection{Asymptotics}
Let $\boldsymbol{Z}$ be a solution of \hyperref[rhr]{\rhr} granted by Lemma~\ref{lem:rhr} and $\widetilde{\boldsymbol{N}}$ be the matrix function constructed in \eqref{eq:n}. Then it can be easily checked that
\[
\boldsymbol{X} = \left(\begin{array}{cc} \gamma_n & 0 \smallskip \\ 0 &  \gamma_n^* \end{array}\right)\boldsymbol{Z}\widetilde{\boldsymbol{N}}
\]
solves \hyperref[rhs]{\rhs} and therefore
\[
{\boldsymbol Y}:= \left(\begin{array}{cc} \gamma_n & 0 \smallskip \\ 0 &  \gamma_n^* \end{array}\right) \boldsymbol{ZN} \left\{
\begin{array}{ll}
 \left(\begin{array}{cc}1&0 \\ w_\Delta/\rho & 1 \end{array}\right), & \mbox{in} \quad \Omega, \smallskip \\
\boldsymbol{I}, & \mbox{in} \ \C\setminus\overline\Omega,
\end{array}
\right.
\]
solves \hyperref[rhy]{\rhy} by Lemma~\ref{lem:rhs}. Given any closed set $K\subset\overline\C\setminus\Delta$, choose $\Omega$ so that $K\subset \overline\C\setminus\overline\Omega$.  Write
\[
\boldsymbol{Z} = \left(\begin{array}{cc} 1+\upsilon_{n1} & \upsilon_{n2} \\ \upsilon_{n3} & 1+\upsilon_{n4} \end{array}\right),
\]
where we know from Lemma~\ref{lem:rhr} that $|\upsilon_{nk}|\leq C_\varepsilon^{-n}$ uniformly in $\overline\C$ ($\upsilon_{nk}(\infty)=0$ as $\boldsymbol{Z}(\infty)=\boldsymbol{I}$). Then
\[
[\boldsymbol{Y}]_{1i} = \big(1+\upsilon_{n1}\big)[\boldsymbol{N}]_{1i} + \upsilon_{n2}[\boldsymbol{N}]_{2i}, \quad i\in\{1,2\}.
\]
The claim of Theorem~\ref{thm:main} now follows from \eqref{eq:y} and \eqref{eq:n}.

\singlespacing

\bibliographystyle{plain}
\bibliography{../../bibliography}

\begin{thebibliography}{10}

\bibitem{Akh60}
N.I. Akhiezer.
\newblock Orthogonal polynomials on several intervals.
\newblock {\em Dokl. Akad. Nauk {SSSR}}, 134:9--12, 1960.
\newblock English transl. in {\it{S}oviet {M}ath. {D}okl.} 1, 1960.

\bibitem{Akhiezer}
N.I. Akhiezer.
\newblock {\em Elements of the Theory of Elliptic Functions}.
\newblock Amer. Math. Soc., Providence, RI, 1990.

\bibitem{ApVA04}
A.I. Aptekarev and W.~Van Assche.
\newblock Scalar and matrix {R}iemann-{H}ilbert approach to the strong
  asymptotics of {P}ad\'e approximants and complex orthogonal polynomials with
  varying weight.
\newblock {\em J. Approx. Theory}, 129:129--166, 2004.

\bibitem{uAY}
A.I. Aptekarev and M.~Yattselev.
\newblock Pad\' e approximants for functions with branch points --- strong
  asymptotics of {N}uttall-{S}tahl polynomials.
\newblock Submitted for publication. \url{http://arxiv.org/abs/1109.0332}.

\bibitem{BakerGravesMorris}
G.A. Baker and P.~Graves-Morris.
\newblock {\em Pad\'e Approximants}, volume~59 of {\em Encyclopedia of
  Mathematics and its Applications}.
\newblock Cambridge University Press, 1996.

\bibitem{BY10}
L.~Baratchart and M.~Yattselev.
\newblock Convergent interpolation to {C}auchy integrals over analytic arcs
  with {J}acobi-type weights.
\newblock {\em Int. Math. Res. Not.}, 2010.
\newblock Art. ID rnq 026, pp. 65.

\bibitem{BY13}
L.~Baratchart and M.~Yattselev.
\newblock {P}ad\'e approximants to a certain elliptic-type functions.
\newblock {\em J. Anal. Math.}, 121:31--86, 2013.

\bibitem{Deift}
P.~Deift.
\newblock {\em Orthogonal Polynomials and Random Matrices: a Riemann-Hilbert
  Approach}, volume~3 of {\em Courant Lectures in Mathematics}.
\newblock Amer. Math. Soc., Providence, RI, 2000.

\bibitem{M-FRakhSuet12}
A.~Mart\'inez Finkelshtein, E.A. Rakhmanov, and S.P. Suetin.
\newblock {H}eine, {H}ilbert, {P}ad\'e, {R}iemann, and {S}tieljes: a {J}ohn
  {N}uttall's work 25 years later.
\newblock In J.~Arves\'u and G.~L\'opez Lagomasino, editors, {\em Recent
  Advances in Orthogonal Polynomials, Special Functions, and Their
  Applications}, volume 578, pages 165---193, 2012.
\newblock \url{http://arxiv.org/abs/1111.6139}.

\bibitem{FIK91}
A.S. Fokas, A.R. Its, and A.V. Kitaev.
\newblock Discrete {P}anlev\'e equations and their appearance in quantum
  gravity.
\newblock {\em Comm. Math. Phys.}, 142(2):313--344, 1991.

\bibitem{FIK92}
A.S. Fokas, A.R. Its, and A.V. Kitaev.
\newblock The isomonodromy approach to matrix models in {2D} quantum
  gravitation.
\newblock {\em Comm. Math. Phys.}, 147(2):395--430, 1992.

\bibitem{Gakhov}
F.D. Gakhov.
\newblock {\em Boundary Value Problems}.
\newblock Dover Publications, Inc., New York, 1990.

\bibitem{Gon82}
A.A. Gonchar.
\newblock On uniform convergence of diagonal {P}ad\'e approximants.
\newblock {\em Math. USSR Sb.}, 43(527--546), 1982.

\bibitem{KMcLVAV04}
A.B. Kuijlaars, K.T.-R. McLaughlin, W.~Van Assche, and M.~Vanlessen.
\newblock The {R}iemann-{H}ilbert approach to strong asymptotics for orthogonal
  polynomials on $[-1,1]$.
\newblock {\em Adv. Math.}, 188(2):337--398, 2004.

\bibitem{Nut84}
J.~Nuttall.
\newblock Asymptotics of diagonal {H}ermite-{P}ad\'e polynomials.
\newblock {\em J. Approx. Theory}, 42(4):299--386, 1984.

\bibitem{Nut90}
J.~Nuttall.
\newblock Pad\'e polynomial asymptotic from a singular integral equation.
\newblock {\em Constr. Approx.}, 6(2):157--166, 1990.

\bibitem{NutS77}
J.~Nuttall and S.R. Singh.
\newblock Orthogonal polynomials and {P}ad\'e approximants associated with a
  system of arcs.
\newblock {\em J. Approx. Theory}, 21:1--42, 1977.

\bibitem{Pade92}
H.~Pad\'e.
\newblock Sur la repr\'esentation approch\'ee d'une fonction par des fractions
  rationnelles.
\newblock {\em Ann. Sci Ecole Norm. Sup.}, 9(3):3--93, 1892.

\bibitem{uPerevRakh}
E.A. Perevoznikova and E.A. Rakhmanov.
\newblock Variation of the equilibrium energy and {S}-property of compacta of
  minimal capacity.
\newblock Manuscript, 1994.

\bibitem{Ransford}
T.~Ransford.
\newblock {\em Potential Theory in the Complex Plane}, volume~28 of {\em London
  Mathematical Society Student Texts}.
\newblock Cambridge University Press, Cambridge, 1995.

\bibitem{St85}
H.~Stahl.
\newblock Extremal domains associated with an analytic function. {I, II}.
\newblock {\em Complex Variables Theory Appl.}, 4:311--324, 325--338, 1985.

\bibitem{St85b}
H.~Stahl.
\newblock Structure of extremal domains associated with an analytic function.
\newblock {\em Complex Variables Theory Appl.}, 4:339--356, 1985.

\bibitem{St86}
H.~Stahl.
\newblock Orthogonal polynomials with complex valued weight function. {I, II}.
\newblock {\em Constr. Approx.}, 2(3):225--240, 241--251, 1986.

\bibitem{St97}
H.~Stahl.
\newblock The convergence of {P}ad\'e approximants to functions with branch
  points.
\newblock {\em J. Approx. Theory}, 91:139--204, 1997.

\bibitem{St98}
H.~Stahl.
\newblock Spurious poles in {P}ad\'e approximation.
\newblock {\em J. Comput. Appl. Math.}, 99:511--527, 1998.

\bibitem{Suet00}
S.P. Suetin.
\newblock Uniform convergence of {P}ad\'e diagonal approximants for
  hyperelliptic functions.
\newblock {\em Mat. Sb.}, 191(9):81--114, 2000.
\newblock English transl. in {\it {M}ath. {S}b.} 191(9):1339--1373, 2000.

\bibitem{Suet03}
S.P. Suetin.
\newblock Convergence of {C}hebysh\"ev continued fractions for elliptic
  functions.
\newblock {\em Mat. Sb.}, 194(12):63--92, 2003.
\newblock English transl. in {\it {M}ath. {S}b.} 194(12):1807--1835, 2003.

\bibitem{Wid69}
H.~Widom.
\newblock Extremal polynomials associated with a system of curves in the
  complex plane.
\newblock {\em Adv. Math.}, 3:127--232, 1969.

\bibitem{uY3}
M.~Yattselev.
\newblock Nuttall's theorem on algebraic {S}-contours.
\newblock \emph{To be submitted.}

\bibitem{Zver71}
E.I. Zverovich.
\newblock Boundary value problems in the theory of analytic functions in
  {H}\"older classes on {R}iemann surfaces.
\newblock {\em Russian Math. Surveys}, 26(1):117--192, 1971.

\end{thebibliography}

\end{document}